\newtheorem{theorem}{Theorem}
\newtheorem{corollary}[theorem]{Corollary}
\newtheorem{lemma}[theorem]{Lemma}
\newtheorem{prop}[theorem]{Proposition}
\theoremstyle{definition}
\newtheorem{defn}[theorem]{Definition}
\newtheorem{example}{Example}
  \let\l\lambda   
\let\GL\Lambda
\def\C{\mathbb C}
\def\G{\mathbf G}
\def\GL{\mathbf{GL}}
\def \GL2 {{\text{GL}_2}}
\def\Tr{{\rm Tr}}
\def\Gal{{\rm Gal}}
\def\Frob{{\rm Fr}}
\def\F{{\mathbb F}}
\def\Z{{\mathbb Z}}
\def\Q{{\mathbb Q}}
\def \R {\mathcal R}
\def\G{\Gamma}
\def\({\left(}
\def\){\right)}
\def \l {\lambda}
\def\CC#1#2{\binom {#1}{#2}}
\def\C{\mathbb{C}}
\def\Z{\mathbb{Z}}
\def\Q{\mathbb{Q}}
\def\F{\mathbb{F}}
\def\Q{\mathbb Q}
\def\({\left(}
\def\){\right)}
\def\G{\Gamma}
\def\CC#1#2{\binom {#1}{#2}}
\def\C{\mathbb{C}}
\def\R{\mathbb{R}}
\def\Z{\mathbb{Z}}
\def\Q{\mathbb{Q}}
\def\F{\mathbb{F}}
\def\H{\mathfrak H}
\def \Frob{\text{Frob}}
\newcommand{\fp}
{\mathbb{F}_p}
\newcommand{\fq}
{\mathbb{F}_q}
\newcommand{\fpc}
{\mathbb{F}_p^{\times}}
\newcommand{\fqhat}
{\widehat{\mathbb{F}_{q}^{\times}}}
\newcommand*\HYPERskip{&}
\newcommand*\pFq{
\begingroup
\catcode`\,\active
\def ,{\HYPERskip}%
\doHyper
}
\def\doHyper#1#2#3#4#5{%
\, _{#1}F_{#2}\left[\begin{matrix}#3 \smallskip \\  #4\end{matrix} \; ; \; #5\right]%
\endgroup
}
\newcommand{\hg}[4]{
{}_{2}F_{1} \left[
\begin{matrix}
#1 & #2 \smallskip \\
   & #3 \\
\end{matrix}
; #4
\right]
}
\newcommand{\hgq}[4]{
\,_{2}F_{1} \left(
\begin{matrix}
#1 & #2 \\
   & #3 \\
\end{matrix}
\, ; #4
\right)_q
}
\newcommand{\hgp}[4]{
_{2}F_{1} \left(
\begin{matrix}
#1 & #2 \\
   & #3 \\
\end{matrix}
\, ; #4
\right)_p
}
\begin{document}

\title{Generalized Legendre curves and Quaternionic Multiplication}

\author{Alyson Deines, Jenny G. Fuselier, Ling Long, Holly Swisher, Fang-Ting Tu}

\address{Center for Communications Research, San Diego, CA 92121, USA}
\email{aly.deines@gmail.com }
\address{High Point University, High Point, NC 27268, USA}
\email{jfuselie@highpoint.edu}
\address{Louisiana State University, Baton Rouge, LA 70803, USA}
\email{llong@math.lsu.edu}
\address{Oregon State University, Corvallis, OR 97331, USA}
\email{swisherh@math.oregonstate.edu}
\address{National Center for Theoretical Sciences, Hsinchu, Taiwan 300, R.O.C.}
\email{ft12@math.cts.nthu.edu.tw}

\begin{abstract}
{This paper is devoted to abelian varieties arising from generalized Legendre curves. In particular, we consider their corresponding Galois representations, periods, and endomorphism algebras. For certain one parameter families of 2-dimensional abelian varieties of this kind, we determine when the endomorphism algebra of each fiber defined over the algebraic closure of $\Q$ contains a quaternion algebra.}
\end{abstract}

\keywords{Abelian varieties, hypergeometric functions, Galois representations, triangle groups, counting points over finite fields, Shimura curves}

\subjclass[2010]{33C05, 11G10, 11F80}

\dedicatory{Dedicated to Professor Wen-Ching Winnie Li, an incredible woman, an amazing leader, and who has inspired so many of us.}

\maketitle



\section{Introduction}

Algebraically, for integers $2\leq e_1, e_2, e_3 \leq \infty$, the \emph{triangle
group} $(e_1,e_2,e_3)$ is defined by the presentation
\[
\langle x,y \mid x^{e_1}=y^{e_2}=(xy)^{e_3}=id\rangle.
\]
A triangle group is called \emph{arithmetic} if it has a unique embedding  to $SL_2(\mathbb
R)$ with image either commensurable with $PSL_2(\Z)$, which is isomorphic to $(2,3,\infty)$, or derived from an order of {certain} indefinite quaternion algebra over a totally real field, see \cite[Section 3, Defn. 1]{Takeuchi-triangle}.  Arithmetic triangle groups have been classified  by Takeuchi \cite{Takeuchi-triangle}, \cite{Takeuchi}. Given an arithmetic triangle group
$\G$, its action on the upper half plane, $\mathfrak H$, via linear transformations yields a quotient space.  This space is a modular curve when at least one of $e_i$ is $\infty$; otherwise, it is a Shimura curve.  While modular curves parametrize certain isomorphism classes of elliptic curves, Shimura curves parametrize isomorphism classes of certain 2-dimensional abelian varieties with quaternionic multiplication \cite{Shimura-pav}, \cite{Shimura}. {For this reason, our main result focuses on $2$-dimensional abelian varieties.}  Recently, Yang has computed explicitly automorphic forms on Shimura curves in terms of hypergeometric series \cite{Yang}. Based on that, Yang and the fifth author obtained explicit algebraic transformations of hypergeometric functions \cite{TY}.

Each arithmetic triangle group can be realized as a monodromy group of some {ordinary differential equation (ODE)} satisfied by an integral of the form
\begin{equation}\label{eq:ellpint}
 \int_0^1 \frac{dx}{\sqrt[N]{x^i(1-x)^j(1-\l x)^k}},
\end{equation}
with $N,i,j,k\in \Z$.  Wolfart realized these integrals as periods of the generalized Legendre curves
\[
C_\l ^{[N;i,j,k]}:y^N=x^i(1-x)^j(1-\l x)^k,
\]
where $\l$ is a constant and $N,i,j,k$ are suitable natural numbers \cite{Wolfart}. In this paper, we assume that $1\le i,j,k<N$.  We additionally assume $N\nmid i+j+k$, so that the generic abelian subvarieties that we extract from the smooth models of $C_\l^{[N;i,j,k]}$ do not have complex multiplication (CM). {For a $g$-dimensional abelian variety $A$ over a totally real field $K$, this means the endomorphism of $A$ is a degree $2g$ CM field over $K$.} The CM cases have been previously considered in papers like \cite{Shiga-Tsutsui-Wolfart} and will not be our focus here.

Let $\G$ be an arithmetic triangle group with each  $e_i$ finite.  Then $\G$ corresponds to an explicit quaternion algebra $H_\G$ over a totally real field $K_\G$. Takeuchi gives descriptions of $K_\G$ and $H_\G$ in terms of $e_1,e_2,e_3$ in \cite[Proposition 2]{Takeuchi}. For example, $H_{(3,6,6)}=\left ( \frac{-3,6}{\Q} \right ),$ which is isomorphic to $\left ( \frac{-3,2}{\Q} \right )$. {For any field $K$ whose characteristic is not $2$, $a,b\in K$, the notation $\left (\frac{a,b}{K} \right )$ stands for a quaternion $K$-algebra, which is a 4-dimensional $K$-algebra generated by $1,i,j,ij$ where, $i^2=a,j^2=b, ij=-ji$.}

Assume  {$\gcd(i,j,k)$ is coprime to $N$} and $\l\in \overline \Q$.  Let $J_\l^{[N;i,j,k]}$ be  the Jacobian variety  constructed from the smooth model of $C_\l^{[N;i,j,k]}$. We will consider the primitive part $J_\l^{new}$ of $J_\l^{[N;i,j,k]}$. It is a $\varphi(N)$ dimensional abelian variety, where $\varphi$ is the Euler totient function. We will show the following.

\begin{theorem}\label{thm:1}
Let $N=3,4,6$ and other notation and assumptions as above, in particular, $N\nmid i+j+k$.   Then for each $\l \in \overline{\Q}$, the endomorphism algebra of $J_\l^{new}$  contains a quaternion algebra over $\Q$  if and only if
\[
B \left (\frac{N-i}N,\frac{N-j}N \right )\Big / B \left (\frac kN, \frac{2N-i-j-k}N \right )\in \overline \Q,
\] where $B(a,b)=\int_0^1 x^{a-1}(1-x)^{b-1}dx=\frac{\G(a)\G(b)}{\G(a+b)}$ is the Beta function, and $\G(\cdot)$ is the Gamma function.
\end{theorem}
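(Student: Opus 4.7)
The plan is to analyze the periods of $J_\lambda^{new}$ through the $\mu_N$-eigenspace decomposition and apply a transcendence criterion for the existence of quaternionic endomorphisms. For $N\in\{3,4,6\}$ we have $\varphi(N)=2$, so $A:=J_\lambda^{new}$ is an abelian surface, and the automorphism $(x,y)\mapsto(x,\zeta_N y)$ of $C_\lambda^{[N;i,j,k]}$ induces an embedding of the imaginary quadratic field $K:=\Q(\zeta_N)$ into $\End^0(A)$. The two primitive characters $\chi,\bar\chi$ of $\mu_N$ then produce eigenforms spanning $H^0(A,\Omega^1)\otimes\C$.

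First I would pick, in each of the two $K$-eigenspaces of $H^0(A,\Omega^1)$, a holomorphic form whose pull-back to the smooth model of $C_\lambda$ is an explicit algebraic multiple of $dx/y$ or its conjugate form, and evaluate their periods over a fixed relative $1$-cycle supported on $[0,1]$. Using Euler's integral representation
\[
B(b,c-b)\,{}_2F_1(a,b;c;\lambda)=\int_0^1 t^{b-1}(1-t)^{c-b-1}(1-\lambda t)^{-a}\,dt,
\]
these two periods take the form
\[
\pi_1 = B\!\left(\tfrac{N-i}{N},\tfrac{N-j}{N}\right)F_1(\lambda),\qquad \pi_2 = B\!\left(\tfrac{k}{N},\tfrac{2N-i-j-k}{N}\right)F_2(\lambda),
\]
where $F_1,F_2$ are ${}_2F_1$ evaluations whose parameters are linked by an Euler/Pfaff transformation, so that $F_1(\lambda)/F_2(\lambda)$ is an algebraic function of $\lambda$.

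Second, I would invoke a criterion characterizing when the endomorphism algebra of an abelian surface carrying an action of an imaginary quadratic field $K$ strictly enlarges to a quaternion algebra over $\Q$. The standing assumption $N\nmid i+j+k$ rules out the quartic CM possibility, so the only way $\End^0(A)$ can properly contain $K$ is via a quaternion algebra $D\supset K$ over $\Q$. An additional endomorphism in $D\setminus K$ forces a $\bar\Q$-linear relation between the periods of the two $K$-eigenspaces; by Wüstholz's analytic subgroup theorem, the existence of such a relation is equivalent to $\pi_1/\pi_2\in\overline\Q$.

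Combining the two steps, and using $\lambda\in\overline\Q$ together with the algebraicity of $F_1(\lambda)/F_2(\lambda)$ as a function of $\lambda$, the condition $\pi_1/\pi_2\in\overline\Q$ is equivalent to the Beta-ratio condition in the theorem. The main obstacle is the second step: making the equivalence between algebraicity of the period ratio and the enlargement of $\End^0(A)$ beyond $K$ rigorous. This requires a careful analysis of the Hodge structure on $H^1(A)$ as a $K\otimes\C$-module together with the transcendence input from Wüstholz's theorem, and relies on the classification of endomorphism algebras of abelian surfaces with multiplication by an imaginary quadratic field (the setting of Shimura's ``false elliptic curves'').
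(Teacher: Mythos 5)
Your plan is the paper's plan: realize the two $\mu_N$-eigendifferentials explicitly, compute their periods via Euler's integral, and feed the resulting $\overline\Q$-linear relations into W\"ustholz's theorem. But one concrete step, as written, fails: if you integrate both eigenforms over the single cycle supported on $[0,1]$, you get $B\left(\frac{N-i}N,\frac{N-j}N\right){}_2F_1\left(\frac kN,\frac{N-i}N;\frac{2N-i-j}N;\l\right)$ and $B\left(\frac iN,\frac jN\right){}_2F_1\left(\frac{N-k}N,\frac iN;\frac{i+j}N;\l\right)$. These two hypergeometric factors have $c$-parameters summing to $2$; they are (up to the power $\l^{1-c}$) the two independent local solutions at $\l=0$ of conjugate HDEs, are \emph{not} related by any Euler or Pfaff transformation, and their ratio is a Schwarz map, transcendental in $\l$. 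The Beta factor $B\left(\frac kN,\frac{2N-i-j-k}N\right)$ that you assign to $\pi_2$ only arises when the second eigenform $\omega_{N-1}$ is integrated over the \emph{other} independent Pochhammer cycle $\gamma_{\frac1\l\infty}$; it is the crossed pair $\int_{\gamma_{01}}\omega_1$ and $\int_{\gamma_{\frac1\l\infty}}\omega_{N-1}$ whose ${}_2F_1$ parts are matched by Euler's transformation, yielding a ratio equal to an explicit algebraic function $\alpha(\l)$ of $\l$ times the Beta quotient. A second, smaller repair: for some admissible $(i,j,k)$ one of the two eigenspaces of $H^0(A,\Omega^1)$ is zero (one can have $\dim V_1=2$, $\dim V_{N-1}=0$), so the second eigenform must in general be taken as a differential of the second kind in the de Rham eigenspace $L_{N-1}$; this is harmless because the version of W\"ustholz's theorem used (Cohen's appendix to Shiga--Tsutsui--Wolfart) covers periods of first- and second-kind differentials and gives $\dim_{\overline\Q}\widehat V_A=2+4\sum(\dim A_\nu)^2/\dim_\Q\mathrm{End}_0A_\nu$.

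With those corrections your ``main obstacle'' resolves exactly as you anticipate, and as the paper does it: algebraicity of the Beta quotient makes $\tau_1,\tau_{N-1}'$ (and their conjugates $\tau_1',\tau_{N-1}$) algebraically dependent, so the period space has $\overline\Q$-dimension below $9$, hence equal to $6$ in the non-CM case, which by the displayed formula forces $\mathrm{End}_0(J_\l^{new})$ to be $4$-dimensional over $\Q$, i.e.\ a quaternion algebra (possibly $M_2(\Q)$ when $J_\l^{new}$ splits); conversely quaternionic multiplication forces dimension $6$ and hence the period relation. The paper also makes the forward direction constructive by writing the period matrix and exhibiting endomorphisms $I,J$ with $I^2,J^2\in\Q$ and $IJ=-JI$ whenever the Beta quotient is algebraic, and for $N=4,6$ gives an independent proof of the converse via Galois representations and Yamamoto's theorem on Gauss sums --- neither of which your outline needs, but which you may find clarifying.
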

{The condition $N=3,4,6$ in the above theorem is to guarantee that the abelian varieties obtained are 2-dimensional.

To prove Theorem \ref{thm:1},} we use results of Wolfart \cite{Wolfart} and Archinard \cite{Archinard}, \cite{Archinard-exceptional} to compute the periods of $J_\l^{new}$ and then use a result of  W\"ustholz  \cite{Wu}  on periods and decompositions of abelian varieties. When $N=4,6$, we prove that the Beta quotient is necessarily algebraic by a different method. It involves computing the Galois representations attached to $J_\l^{new}$ via Greene's Gaussian hypergeometric functions \cite{Greene} and a result of Yamamoto \cite{Yamamoto} on Gauss sums.

{Recently Petkova and Shiga \cite{PS}} give a family of 2-dimensional abelian varieties $A'(\l)$ over the Shimura curve for the arithmetic group $(3,6,6)$ such that for each $\l\in \overline \Q$, the endomorphism algebra $End_0(A'(\l))=End(A'(\l))\otimes _{\Z}\Q$ contains  $\left (  \frac{-3,2}{\Q} \right )$. We give a different construction of a one parameter family of abelian varieties with the same property using  the primitive part of $J_\l^{[6;4,3,1]}$.

Our methods apply more generally.  We use them to treat a few other cases in later sections.


\section{Preliminaries}
\subsection{Notation}
Let $F$ be a number field, $\overline F$ its algebraic closure, and $G_F:=\Gal(\overline F/F)$ the absolute Galois group of $F$. The Galois representations in this paper are continuous homomorphisms of $G_F$ to finite dimensional linear groups over fields like $\overline \Q_\ell$ which we assume to be algebraically closed. Such a Galois representation is said to be \emph{strongly irreducible} if its restriction to any finite index subgroup of $G_F$ remains irreducible.
We use $\zeta_N$ to denote a primitive $N$th root of unity. Let $\mathbb{F}_{q}$ denote the finite field of size $q$, $\widehat{\mathbb{F}_{q}^{\times}}$ the group of all multiplicative characters $\chi$ on $\fq^\times$, and $\varepsilon$ the trivial character.  By convention, we let $\chi(0)=0$ so that we can view $\chi$ over $\mathbb{F}_q$.

\subsection{${}_2F_1$-hypergeometric functions and triangle groups}  The ${}_2F_1$-hypergeometric function for $|\l|<1$ is given by
\[
 \pFq{2}{1}{a,b}{,c}{\l}: = \sum_{k=0}^\infty \frac{(a)_k (b)_k}{(c)_k}\frac{\l^k}{k!},
\]
where $(\alpha)_k=\alpha(\alpha+1)\cdots  (\alpha+k-1)$ denotes the Pochhammer symbol.  It is a solution of the Hypergeometric Differential Equation (HDE), denoted by $HDE(a,b;c;\l)$
\[
\l(1-\l)F'' + [(a+b+1)\l-c]F' + abF = 0.
\]
This differential equation is a Fuchsian differential equation with 3 singularities $0,1,\infty$. When $c\notin \Z$, $\l^{1-c}\pFq{2}{1}{1+a-c,1+b-c}{,2-c}{\l}$ is another independent solution of $HDE(a,b;c;\l)$.
{For further details of hypergeometric functions see \cite{AAR}, \cite{Bailey}, \cite{Slater}.}

In this paper we assume $a,b,c\in \Q$.   One can construct a representation of the fundamental group $\pi_1(\C P^1\setminus \{0,1,\infty\}) \rightarrow GL_2(\mathbb C)$ from $HDE(a,b;c;\l)$ which is determined up to conjugation, see \cite{Yoshida}. The image is called the \emph{monodromy group} of  $HDE(a,b;c;\l)$, and is a triangle group.
The following theorem of Schwarz (see \cite{Yoshida}) gives an explicit correspondence between a hypergeometric function and a Schwarz triangle  $\Delta(p,q,r)$ with  $p,q,r \in \Q$. Each $\Delta(p,q,r)$ is the symmetry group of a tiling of either the Euclidean plane, the unit sphere, or the hyperbolic plane (depending on whether $p+q+r$ is equal to, greater than, or less than $1$, respectively) by triangles with angles $p\pi $, $q\pi$, and $r\pi $. We would like to emphasize that ``$\Delta(p,q,r)$" is different from the triangle group $(e_1,e_2,e_3)$ as $e_1,e_2,e_3$ are the denominators of $p,q,r$ when they are written in {lowest terms} as rational numbers.

\begin{theorem}(Schwarz)\label{schwarz}
Let $f,g$ be two independent solutions to $HDE(a,b;c;\l)$ at a point $z\in\H$, and let $p= |1-c|$, $q=|c-a-b|$, and $r=|a-b|$.  Then the Schwarz map $D=f/g$ gives a bijection from $\H\cup \R$ onto a curvilinear triangle with vertices $D(0),D(1), D(\infty)$, and corresponding angles $p\pi,q\pi, r \pi$.
\end{theorem}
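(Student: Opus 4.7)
The plan is to combine local Frobenius analysis at the three regular singular points $0,1,\infty$ with the Schwarz reflection principle across the real axis, and then to deduce bijectivity via an argument-principle argument.

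First, I would pin down the local behavior of solutions at each singular point. The indicial equation of $HDE(a,b;c;\lambda)$ has exponents $\{0,1-c\}$ at $z=0$, $\{0,c-a-b\}$ at $z=1$, and $\{a,b\}$ at $z=\infty$. Near a singular point $z_0$, a Frobenius basis $\{\tilde f,\tilde g\}$ can be chosen so that one solution is holomorphic and nonvanishing at $z_0$ while the other has the form $(z-z_0)^{\mu}u(z)$ with $u$ a local holomorphic unit, where $\mu$ equals $1-c$, $c-a-b$, or $a-b$ at the three points respectively (with the obvious local change of variable $w=1/z$ at $\infty$, so that $\mu=a-b$ comes from the difference of exponents). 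Since any two fundamental systems of solutions differ by a constant invertible linear change, and $D=f/g$ transforms by a M\"obius map under such a change, in any basis $D$ agrees locally with $(z-z_0)^{\mu}$ times a holomorphic unit, post-composed with a M\"obius transformation. Restricting to $\H$, a small half-disc neighborhood of $z_0$ is sent to a sector of opening angle $|\mu|\pi$, yielding vertex angles $p\pi$, $q\pi$, $r\pi$ at $D(0)$, $D(1)$, $D(\infty)$ respectively; the absolute values absorb the ambiguity in which exponent is called ``the'' nontrivial one.

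Second, the Schwarz reflection principle identifies the boundary. Since $a,b,c\in\Q$, the HDE has real coefficients, so on each open real interval $(-\infty,0),(0,1),(1,\infty)$ there is a basis of real-valued solutions. Our $\{f,g\}$ is obtained from such a real basis by a constant complex linear change of basis, so $D$ restricted to each interval is the image of the real line under a M\"obius transformation, hence a generalized circle in $\widehat\C$. Therefore $D(\R\cup\{\infty\})$ is a union of three circular arcs meeting at $D(0),D(1),D(\infty)$, and together with the local angle computation above this exhibits the image boundary as a curvilinear triangle with vertex angles $p\pi,q\pi,r\pi$.

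Third, for bijectivity, Abel's formula shows the Wronskian $W=f'g-fg'$ is a nonvanishing multiple of an explicit function, so $dD=-W/g^2\,dz$ is nonzero wherever $g\neq 0$ (zeros of $g$ are removable by swapping $f$ and $g$), and $D$ is locally conformal on $\H$. The continuous extension of $D$ to $\overline\H\cup\{\infty\}$ maps the boundary homeomorphically onto the triangle's boundary by the local model $(z-z_0)^{\mu}$, and then the argument principle applied to $D-w_0$ for $w_0$ interior to the triangle shows each such value is attained exactly once in $\H$. The main obstacle is precisely this global injectivity: local conformality and the arc description of the boundary are routine, but one must carefully glue the local corner models with the arc structure to rule out any folding of $\H$ onto its image, and the cleanest way is the argument-principle/boundary-homeomorphism package just sketched.
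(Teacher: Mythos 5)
The paper does not actually prove this theorem: it is stated as Schwarz's classical result with a citation to Yoshida's book, so there is no internal argument to compare yours against, and your proposal must be judged on its own. Its first two steps (Frobenius exponents $\{0,1-c\}$, $\{0,c-a-b\}$, $\{a,b\}$ at $0,1,\infty$; reflection giving circular arcs on the three real intervals; local vertex model $(z-z_0)^{\mu}u(z)$ up to M\"obius) are the standard opening and are fine as far as they go, with one caveat: the Frobenius dichotomy you invoke fails whenever an exponent difference is an integer --- in particular when it is $0$ (e.g.\ $c=1$, so $p=0$), in which case the second solution generically contains a logarithm, your local model is wrong, and the vertex is a cusp. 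Since the statement allows $p,q,r$ to vanish, a complete proof must treat the logarithmic cases separately rather than silently assume $\mu\notin\Z$.

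The more serious gap is exactly where you locate the difficulty and then wave it away: the ``argument-principle/boundary-homeomorphism package'' presupposes that $D$ restricted to $\R\cup\{\infty\}$ traces a Jordan curve with winding number one about interior points, and neither local conformality nor the vertex models deliver this. On each interval, nonvanishing of $D'$ gives strict monotonicity \emph{along a circle}, but a monotone path on a circle can wind around it more than once, and nothing in your argument bounds the total turning or prevents the three arcs from crossing one another; this is not a technicality, since without a hypothesis of the shape $0\le p,q,r\le 1$ (implicit in the cited source) the univalence claim is simply false --- e.g.\ for $b=0$ one may take $g\equiv 1$, so that $D$ is itself a hypergeometric solution, and for large $|a-b|$, $|c-a-b|$ the image of $\mathfrak H$ is not an embedded triangle. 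So your proof as written proves a statement that fails without restrictions it never uses, which signals the missing ingredient. The classical repair, and the route taken in the reference the paper cites, is different: one computes the Schwarzian derivative $\{D;z\}$, which depends only on the equation and is a rational function determined by $p,q,r$; one then constructs, via the Riemann mapping theorem and reflection, the conformal map of $\mathfrak H$ onto a circular triangle with angles $p\pi,q\pi,r\pi$, checks that its inverse satisfies the same Schwarzian equation, and concludes $D$ coincides with that map up to a M\"obius transformation. That argument buys global bijectivity for free and is precisely what replaces the gluing step you acknowledge but do not carry out.
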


\begin{example}
  When $a=\frac 16,b=\frac 13, c=\frac 56$, $p = |1-c|=\frac16, q= |c-a-b|=\frac 13$, $r = |a-b|=\frac 16$. The corresponding triangle group is $(6,3,6)\simeq(3,6,6)$.

\end{example}

\subsection{Generalized Legendre Curves}\label{periods} When $c>b>0$, a formula of Euler (see \cite{AAR}) says
\begin{equation}\label{eq:1}
  \mathcal{P}(\l):=\int_0^1 x^{b-1}(1-x)^{c-b-1}(1-\l x)^{-a}dx= \pFq{2}{1}{a,b}{,c}{\l} \cdot B(b,c-b).
\end{equation}
The restriction  $c>b>0$ is to ensure convergence and can be dropped if we take the Pochhammer contour $\gamma_{01}$ as the integration path \cite{Klein,Yoshida-love}.
If $b$, $c-b \not\in \Z$, which we assume below, then the integral above satisfies
$$
\mathcal{P}(\l) = \, \frac1{\(1-e^{2\pi ib}\)\(1-e^{2\pi i(c-b)}\)}\int_{\gamma_{01}} x^{b-1}(1-x)^{c-b-1}(1-\l x)^{-a}dx,
$$
and is a \emph{period} of an algebraic curve, depending on $a,b,c$. As $(1-e^{2\pi ib})(1-e^{2\pi i(c-b)}) \in \overline \Q$, we will use \eqref{eq:1} to compute the periods below as our criterion lies in the algebraicity of the ratio of two certain periods.
Following Wolfart \cite{Wolfart}, for $\l \neq 0,1$, {the curve  containing this period} can be given by the smooth model of
 $$
    C_\lambda^{[N;i,j,k]}:  y^N=x^i(1-x)^j(1-\lambda x)^k,
 $$
 where $N$ is the least common denominator of $a,b$, and $c$, and the integers $i,j,k$ are defined by $i=N\cdot (1-b)$, $j=N\cdot (1+b-c)$, and $k=N\cdot a$.  Consequently, $\gcd(i,j,k)$ is coprime to $N$.  We call such a curve $C_\lambda^{[N;i,j,k]}$ a {\it generalized Legendre curve}.  It is an $N$-fold cover of $\mathbb C P^1$ which ramifies at  $0,1,\infty$, and $\frac1{\l}$. When $N\mid i+j+k$,  one can rewrite $C_\lambda^{[N;i,j,k]}$ as $y^N=x^i(1-x)^j$ by moving $\frac 1 \l$  to infinity.  In this paper we assume $1\leq i,j,k<N$,  $N\nmid i+j+k$, {and $\gcd(i,j,k)$ is coprime to $N$.}

Using the conversion between $(a,b,c)$ and $(i,j,k,N)$, we can write
\[
\mathcal{P}(\l )=B \left (\frac{N-i}N,\frac{N-j}N\right )\cdot   \pFq{2}{1}{\frac kN, \frac {N-i}N}{,\frac{2N-i-j}N}{\lambda}.
\]
By Theorem \ref{schwarz}, this corresponds to a Schwarz triangle with angles $|\frac{N-i-j}{N}| \pi$, $|\frac{N-j-k}N|\pi$, and $|\frac{N-i-k}N|\pi$.  In this paper, we will exclude the possibility that that the sum of these three angles is $0$ or $\pi$. \footnote{If the sum is $0$, then $i=j=k=\frac{N}2$ which corresponds to the triangle group $(\infty,\infty,\infty)$ with cusps. If the sum is $\pi$, getting rid of the absolute values, we get that the sum is either $|\frac{N-2i}N|\pi$,  $|\frac{N-2j}N|\pi$,  $|\frac{N-2k}N|\pi$, or  $|\frac{3N-2i-2j-2k}N|\pi$. As $1\le i,j,k<N$, the first three possibilities won't occur.  The remaining possibility is that $|\frac{3N-2i-2j-2k}N|\pi=\pi$, which implies that $i+j+k=N$ or $i+j+k=2N$. In either case, $\infty$ is not a singularity for the generalized Legendre curve $C_{\l}^{[N;i,j,k]}$, as we discussed earlier.}

For a fixed curve $C_\lambda^{[N;i,j,k]}$, we let $X(\l):=X_\lambda^{[N;i,j,k]}$ be the desingularization of $C_{\lambda}^{[N;i,j,k]}$. The genus of $X(\l)$ is given by (see \cite{Archinard-exceptional})
\begin{equation}\label{eq:genus}
g(X(\l))=  1+N-\frac{\gcd(N,i+j+k)+\gcd(N,i)+\gcd(N,j)+\gcd(N,k)}2.
\end{equation}
Let $\zeta\in\mu_N$, the multiplicative group of $N$th roots of unity.  For $X(\l)$, the automorphism induced by $A_\zeta:\,(x,y)\mapsto (x,\zeta^{-1} y)$ on $C_\lambda^{[N;i,j,k]}$ plays a central role; it induces a representation of $\Z/N\Z$ on the vector space $H^0(X(\lambda), \Omega^1)$ of the holomorphic differential $1$-forms on $X(\l)$.

\subsection{Holomorphic differential 1-forms on $X_\lambda^{[N;i,j,k]}$} \label{ss:holodiff}
Most of the discussion here can be found in \cite{Archinard-exceptional,Wolfart}. A basis of $H^0(X(\lambda), \Omega^1)$ is given by the regular pull-backs of differentials on $C_\l^{[N;i,j,k]}$ of the  form
\begin{equation}
 \omega= \frac{x^{b_0}(1-x)^{b_1}(1-\lambda x)^{b_2}dx}{y^n}, \quad 0\leq n\leq N-1, \, b_i\in\Z,
\end{equation}
satisfying the following conditions equivalent to the pullback of $\omega$ being regular at $0,1,\frac 1\l, \infty$ respectively.
\begin{align*}
  b_0 \geq \frac{ni+\gcd(N,i)}N-1,\;\;
  b_1 \geq \frac{nj+\gcd(N,j)}N-1,\;\;
  b_2 \geq \frac{nk+\gcd(N,k)}N-1,
  \end{align*}
\begin{equation}\label{eq:6}
   b_0+b_1+b_2 \leq \frac{n(i+j+k)-{\gcd(N,i+j+k)}}{N}-1.
\end{equation}

For each $0\leq n<N$, we let $V_n$ denote the isotypical component of $H^0(X(\lambda), \Omega^1)$ associated to the character $\chi_n:\, \zeta_N\mapsto\zeta_N^n$.  Then the space $H^0(X(\lambda), \Omega^1)$ is decomposed into a direct sum $\displaystyle \bigoplus_{n=0}^{N-1}V_n$.
If $\gcd(n,N)=1$, the dimension of $V_n$ is given by
$
  \dim V_n=\left \{\frac{ni}N\right \}+\left \{\frac{nj}N\right \}+\left \{ \frac{nk}N\right \}-\left \{ \frac{n(i+j+k)}N\right \},
$
where $\left \{ x\right \} =x-\lfloor x \rfloor$ denotes the fractional part of $x$, see \cite{Archinard-exceptional}.  Furthermore,
$$\dim V_n +\dim V_{N-n}=2.$$

The elements of $V_n$ with $\gcd(n,N)=1$ are said to be new. The subspace
 $$
  H^0(X(\lambda), \Omega^1)^{\mbox{new}}=\displaystyle \bigoplus_{\gcd(n,N)=1}V_n
 $$ is of dimension $\varphi(N)$,  Euler's totient function of $N$.

\subsection{Differentials of the second kind on $X_\l^{[N;i,j,k]}$}\label{ss:diff2nd} The de Rham space, denoted by $H_{DR}^1(X(\l),\C)$,  for $X_\l^{[N;i,j,k]}$ is the space of closed differential forms of the second kind on  $X_\l^{[N;i,j,k]}$, modulo exact differentials. It also admits the induced action of $\Z/N\Z$ via the maps $A_\zeta$ on $C_\l^{[N;i,j,k]}$. Thus, comparing with \S\ref{ss:holodiff}, $H_{DR}^1(X(\l),\C)$ also decomposes into eigenspaces $L_n$ associated to the character $\chi_n: \zeta_N \mapsto \zeta_N^n$. {In particular, $V_n$ is a subspace of $L_n$.} When $(n,N)=1$, $\dim L_n=2$. (See \cite{Shiga-Tsutsui-Wolfart} for more details.)

\subsection{Abelian varieties}\label{abelian varieties}
 From the Jacobian of  $X_\l^{[N;i,j,k]}$, one can construct a dimension $g(X(\l))$ (see \eqref{eq:genus}) abelian variety  $J_\l:=J_\l^{[N;i,j,k]}$ defined over $\Q(\l)$. For each $n\mid N$, $J_\l^{[n;i,j,k]}$ is a natural quotient of $J_\l^{[N;i,j,k]}$ and thus $J_\l^{[n;i,j,k]}$ is isomorphic to an abelian subvariety of $J_\l$. Let $J_\l^{new}$ be the primitive part of $J_\l$ so that its intersection with any abelian subvariety isomorphic to $J_\l^{[n;i,j,k]}$ for each $n\mid N$ is zero. Archinard \cite{Archinard} showed that the dimension of $J_\l^{new}$ is $\varphi(N)$ when $N\nmid i+j+k$; {it can also be seen from the discussion in \S \ref{ss:holodiff}.}

\subsection{Gauss sums and the Gamma function}
Here we  will recall some results of Yamamoto \cite{Yamamoto}.
For any Dirichlet character $\chi$ modulo $p$, define the Gauss sum $g(\chi)$ over $\F_p$ by
\[
g(\chi)=\sum_{x\in \fp} \chi(x)\zeta_p^x.
\]
Recall that the Jacobi sum of two characters $\chi_1,\chi_2$ is
\begin{equation}\label{eq:J}
  J(\chi_1,\chi_2)=\sum_{x\in \fp}\chi_1(x)\chi_2(1-x).
\end{equation}When $\chi_1\chi_2\neq \varepsilon$, $J(\chi_1,\chi_2)=\frac{g(\chi_1)g(\chi_2)}{g(\chi_1\chi_2)}$.
(Note that these definitions differ by a sign from those in Yamamoto \cite{Yamamoto}.)  Gauss sums (resp. Jacobi sums) are analogues of the Gamma function (resp. Beta function) over finite fields. For instance, one can compare \eqref{eq:J} with the integral formula for the Beta function. They  also satisfy similar functional equations. When $\chi\in \widehat{ \F_p^{\times}}$
\begin{equation}\label{gauss-norm}
 g(\chi)\overline{g(\chi)}=p, \quad \text{if} \quad \chi \neq \varepsilon,
\end{equation}
which is the analogue of the reflection formula  for $\Gamma(\cdot)$
\begin{equation}\label{reflection}
 \G(z)\G(1-z)=\frac{\pi}{\sin (\pi z)}.
\end{equation}
The Hasse-Davenport relation for Gauss sums can be written as
\begin{equation}\label{H-D}
 g(\chi^{\ell a})=(-1)^{\ell}\chi(\ell ^{\ell a-M/2})\chi(2^{M/2})^{1-\ell }g(\chi^{M/2})^{1-\ell }\prod_{j=0}^{\ell -1}g(\chi^{a+(M/\ell )j})
\end{equation}
where { $a\in \Z$, $M$ is {any} even integer that divides $p-1$, $\ell $ divides $M$, and $\chi$ is an order $M$ Dirichlet character modulo $p$.}   We note that this differs by a factor of $(-1)^{\ell }$ from Yamamoto (see (2) in \cite{Yamamoto}) due to our choices of Gauss and Jacobi sums.  This is an analogue of the multiplication formula for $\Gamma(\cdot)$,
\begin{equation}\label{multiplication}
 \G(\ell z)=\ell ^{(\ell z-\frac12)}2^{\frac{(1-\ell )}{2}}\G\left(\frac 12 \right)^{1-\ell } \cdot \prod_{j=0}^{\ell -1}\G\left(z+\frac j \ell \right).
\end{equation}

\noindent In \cite{Yamamoto}, Yamamoto proved a conjecture of Hasse which says the following:
\begin{theorem}[Yamamoto]\label{Yamamoto}
Let $M\ge 4$ be an even integer, $p\equiv 1\pmod M$ be a prime, then \eqref{gauss-norm} and \eqref{H-D} are the only two relations connecting the Gauss sums $g(\chi)$ for $\chi\in \widehat { \F_p^{\times}}$ satisfying $\chi^M=\varepsilon$, when considered as ideals {in the ring of algebraic integers.}
\end{theorem}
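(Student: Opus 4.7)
The plan is to invoke Stickelberger's theorem to translate the multiplicative problem about Gauss-sum ideals into a $\Z$-linear problem about Stickelberger elements in the group ring $\Z[(\Z/M\Z)^\times]$, and then finish with a rank and index comparison.

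Fix a character $\chi$ of $\F_p^\times$ of exact order $M$, so that every $\chi'$ with $\chi'^M=\varepsilon$ is of the form $\chi^a$ for some $a\in\Z/M\Z$; write $G_a:=g(\chi^a)$. Since $p\equiv 1\pmod M$, the prime $p$ splits completely in $K:=\Q(\zeta_M)$. Fixing a prime $\mathfrak P$ of $L:=\Q(\zeta_M,\zeta_p)$ above $p$, Stickelberger's theorem yields the ideal factorization
\[
(G_a)\;=\;\mathfrak P^{\,\theta(a)}, \qquad \theta(a)\;:=\;\sum_{t\in(\Z/M\Z)^\times}\Big\{\tfrac{at}{M}\Big\}\,\sigma_t^{-1},
\]
where $\sigma_t\in\Gal(L/\Q(\zeta_p))$ sends $\zeta_M\mapsto\zeta_M^t$. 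Consequently, an ideal relation $\prod_a (G_a)^{n_a}=(1)$ is equivalent to the additive identity $\sum_a n_a\,\theta(a)=0$ in the group ring. The problem thereby reduces to determining $\ker\Phi$, where $\Phi\colon\Z^{M-1}\to\Z[(\Z/M\Z)^\times]$ sends $e_a\mapsto\theta(a)$.

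The next step is to verify that the two proposed relations do lie in $\ker\Phi$. From $\overline{g(\chi^a)}=\chi^a(-1)g(\chi^{-a})$, relation \eqref{gauss-norm} becomes $(G_a)(G_{-a})=(p)$, which at the Stickelberger level is the identity $\theta(a)+\theta(-a)=\sum_t\sigma_t^{-1}$, a direct consequence of $\{x\}+\{-x\}=1$ for $x\notin\Z$. The Hasse-Davenport relation \eqref{H-D} yields at the ideal level a distribution identity of the form $\theta(\ell a)=\sum_{j=0}^{\ell-1}\theta(a+Mj/\ell)+c_\ell\cdot\theta(M/2)$ for a specific integer $c_\ell$, which follows from the classical distribution relation for fractional parts. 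Let $R_0\subset\Z^{M-1}$ denote the submodule of $\ker\Phi$ generated by these two families.

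The main obstacle is the converse inclusion $\ker\Phi\subseteq R_0$, requiring that no further relations exist. The strategy has two parts. First, a rank count: Sinnott's formula for the index of the Stickelberger ideal in $\Z[(\Z/M\Z)^\times]$ pins down the $\Z$-rank of the image of $\Phi$, hence also the rank of $\ker\Phi$; one then checks that the independent norm relations (one per unordered pair $\{a,-a\}$) together with the independent Hasse-Davenport distributions (one per prime divisor $\ell\mid M$ and appropriate $a$, modulo expected overlaps) give a sublattice $R_0$ of the same rank. Second, one must rule out a finite torsion quotient $\ker\Phi/R_0$. For this, one reduces via the Chinese Remainder Theorem to the prime-power case $M=q^r$, and then uses a delicate combinatorial analysis of the step function $t\mapsto\sum_a n_a\{at/M\}$ on $(\Z/M\Z)^\times$: for any hypothetical $(n_a)\in\ker\Phi\setminus R_0$, one exhibits a $t$ where this function is nonzero, contradicting $\sum_a n_a\theta(a)=0$. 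This combinatorial step in the prime-power case is the heart of Yamamoto's original argument and is where the real difficulty lies.
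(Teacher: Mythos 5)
The paper offers no proof of this statement at all: it is quoted as a known theorem from Yamamoto's 1966 article \cite{Yamamoto}, so there is no internal argument to compare against, and your sketch must be judged against the literature. The framing you set up is correct as far as it goes. Since $p\equiv 1\pmod M$ splits completely in $\Q(\zeta_M)$, the ideals $(g(\chi^a))$ are supported on the free Galois orbit of a single prime above $p$, so Stickelberger's theorem does faithfully translate multiplicative ideal relations into $\Z$-linear relations among the elements $\theta(a)$, and your two verifications are right: the norm relation \eqref{gauss-norm} corresponds to $\{x\}+\{-x\}=1$, and Hasse--Davenport \eqref{H-D} corresponds to the distribution relation $\sum_{j=0}^{\ell-1}\{x+j/\ell\}=\{\ell x\}+\tfrac{\ell-1}{2}$, with $c_\ell=1-\ell$ matching the exponent of $g(\chi^{M/2})$.

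The genuine gap is that everything after this reduction---which is the theorem itself---is deferred rather than proved. The inclusion $\ker\Phi\subseteq R_0$ \emph{is} Hasse's conjecture, and your text explicitly leaves the decisive step (``a delicate combinatorial analysis of the step function $t\mapsto\sum_a n_a\{at/M\}$'') unexecuted, merely asserting that a witness $t$ with nonzero value can always be exhibited. The rank comparison cannot close this: equality of ranks of $R_0$ and $\ker\Phi$ is entirely compatible with a nontrivial finite quotient $\ker\Phi/R_0$, and that finite quotient is precisely where the difficulty lives (Yamamoto's later paper \cite{Yamamoto2} on the ``gap group'' of sign ambiguities is devoted to exactly this kind of residual torsion at the level of numbers rather than ideals). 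The proposed reduction ``via the Chinese Remainder Theorem to the prime-power case'' is also not justified as stated: the fractional parts $\{at/M\}$ and the Hasse--Davenport relations (which require the even integer $M/2$ and mix divisors $\ell\mid M$) do not factor through the prime-power components of $\Z/M\Z$, so splitting $\ker\Phi/R_0$ multiplicatively needs an argument you have not supplied. (A historical aside: Sinnott's index formula postdates Yamamoto by more than a decade; citing it is legitimate in a modern treatment, but the rank of the Stickelberger lattice can be computed directly, and in any case the index computation addresses a different lattice comparison and does not by itself settle the torsion question for your specific $R_0$.) In short: correct translation, correct proof of the easy inclusion, but the converse inclusion is named, not proved.
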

In \cite{Yamamoto2}, Yamamoto classified additional relations between Gauss sums due to sign ambiguity, {which do not occur in any of our settings}.

 In comparison, for any integer $i$ coprime to $M$ and any $\chi\in  \widehat { \F_p^{\times}}$ of order $M$, we have the following ``{analogy}''
\[
\begin{array}{cccc}
 \frac{i}M&\Longleftrightarrow  &\chi^i\\
\frac12 &\Longleftrightarrow&  \chi^{M/2}\\
\G(\frac{i}M)&\Longleftrightarrow&g(\chi^i)\\
B(\frac{i}M,\frac{j}M) &\Longleftrightarrow& J(\chi^i,\chi^j).\\
\end{array}
\]

\begin{lemma}\label{prop:gauss-Beta}
 Let $M\ge 4$ be an even integer and $i,j,k\in \Z$ such that $M$ does not divide any of $i,j,k,k-i,k-j$. If for any prime {$p\equiv 1 \pmod M$} and any $\eta\in  \widehat { \F_p^{\times}}$ of order $M$, the quotient $F(\eta):=J(\eta^j,\eta^{k-j})/J(\eta^i, \eta^{k-i})$ is a character, { by which we mean it is a homomorphism from the group of order $M$ characters in $\widehat{\F}_p$ to $\C^{\times}$,}  then $B(\frac{j}M, \frac{k-j}M)/B(\frac{i}M, \frac{k-i}M)$ is an algebraic number.
 \end{lemma}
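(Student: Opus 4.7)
The plan is to combine Yamamoto's Theorem \ref{Yamamoto} with the explicit Gauss sum / Gamma function dictionary of Section 2.6, using the parallel relation structure \eqref{gauss-norm}$\leftrightarrow$\eqref{reflection} and \eqref{H-D}$\leftrightarrow$\eqref{multiplication}. First I would rewrite the Jacobi-sum quotient in terms of Gauss sums. Using $J(\chi_1,\chi_2)=g(\chi_1)g(\chi_2)/g(\chi_1\chi_2)$, the common factors $g(\eta^k)$ in the numerator and denominator of $F(\eta)$ cancel and one is left with
\[
F(\eta) = \frac{g(\eta^j)\,g(\eta^{k-j})}{g(\eta^i)\,g(\eta^{k-i})}.
\]
The hypothesis that $F$ is a character on the cyclic group of order-$M$ characters generated by $\eta$ forces $F(\eta)$ to be a root of unity, hence an algebraic unit, so as an ideal equality in $\mathbb{Z}[\zeta_{Mp}]$ (for any prime $p\equiv 1\pmod M$) one has $(g(\eta^j)g(\eta^{k-j}))=(g(\eta^i)g(\eta^{k-i}))$.

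Next I would invoke Yamamoto's theorem: since the only ideal relations among the Gauss sums $g(\chi)$ with $\chi^M=\varepsilon$ arise from \eqref{gauss-norm} and \eqref{H-D}, the above identity follows from a finite formal derivation using only these two relations. Unwinding the derivation produces a scalar identity
\[
g(\eta^j)\,g(\eta^{k-j}) = U(\eta)\cdot g(\eta^i)\,g(\eta^{k-i}),
\]
in which $U(\eta)$ is a product of roots of unity, rationals, character values $\chi(2),\chi(\ell)$, and powers of $p$. Crucially, because the left-hand ratio is a unit, every factor of $p$ introduced by \eqref{gauss-norm} must be canceled by a reciprocal application, so $U(\eta)$ has total $p$-adic valuation zero.

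The final step is to transport the identity to the Gamma setting via the dictionary $g(\chi^a)\leftrightarrow \Gamma(a/M)$. Each application of \eqref{gauss-norm} in the above derivation is replaced by the reflection formula \eqref{reflection}, contributing a factor $\pi/\sin(\pi a/M)$, and each application of \eqref{H-D} is replaced by the multiplication formula \eqref{multiplication}, contributing an algebraic factor that includes $\Gamma(1/2)^{1-\ell}=\pi^{(1-\ell)/2}$. Running the formally identical derivation on the Gamma side yields
\[
\Gamma(j/M)\,\Gamma((k-j)/M) = V\cdot \Gamma(i/M)\,\Gamma((k-i)/M),
\]
where $V$ is assembled from the same combinatorial data as $U$. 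The vanishing $p$-content of $U$ translates into vanishing net $\pi$-content of $V$, leaving $V\in\overline{\mathbb{Q}}$. Dividing by $\Gamma(k/M)$, which cancels inside $B(\cdot,\cdot)$, gives the claimed algebraicity of $B(j/M,(k-j)/M)/B(i/M,(k-i)/M)$.

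The main obstacle is the rigorous bookkeeping in the transport step: Yamamoto's theorem is formulated as a statement about ideal relations, so one must verify that the formal derivation can be unwound into a scalar identity and that the same formal manipulations, carried out with $\Gamma$ in place of $g$ and $\pi$ in place of $p$, produce a valid Gamma identity in which the $\pi$-content cancels exactly when the $p$-content does. The cleanest way is probably to organize the derivation in the free abelian group on residue classes modulo $M$, treating $p$ and $\pi$ as a single universal symbol tracked by a grading; the characterness hypothesis precisely kills this grading, and the desired algebraicity then follows.
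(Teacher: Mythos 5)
Your proposal is correct and takes essentially the same route as the paper's proof: invoke Theorem \ref{Yamamoto} to conclude that the Jacobi-sum quotient is reducible to a character using only \eqref{gauss-norm} and \eqref{H-D}, then transport the derivation through the Gauss-sum/Gamma dictionary (replacing \eqref{gauss-norm} by \eqref{reflection} and \eqref{H-D} by \eqref{multiplication}), with the key observation that the norm relation must be applied equally often to numerator and denominator so that the resulting powers of $\pi$ cancel. Your bookkeeping of the $p$-adic valuation versus the $\pi$-grading is in fact a more careful rendering of the step the paper states in one sentence.
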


{ We remark that in Lemma \ref{prop:gauss-Beta}, a particularly convenient choice for $\eta$ is the Teichmuller character raised to the power $-\frac{p-1}M$, i.e. $\eta(x)\equiv x^{-\frac{p-1}M} \pmod{p}$ for all $x\in \F_p$. See \cite{WIN3b} for more details.}

\begin{proof}
By Theorem \ref{Yamamoto}, $J(\eta^j,\eta^{k-j})/J(\eta^i, \eta^{k-i})$ can be reduced to a character using only the relations \eqref{gauss-norm} and \eqref{H-D}. By \eqref{H-D}, when
$\displaystyle
\frac{g(\chi^{\ell a})}{g(\chi^{M/2})^{1-\ell }\prod_{j=0}^{\ell -1}g(\chi^{a+(M/\ell )j})}
$
is a character, then  \eqref{multiplication} says that
$\displaystyle
\frac{\G(\frac{\ell a}M)}{\G(\frac 12)^{1-\ell }\prod_{j=0}^{\ell -1}\G(\frac aM+\frac j \ell )}
$
is algebraic.  Thus if \eqref{gauss-norm} is needed to reduce $J(\eta^j,\eta^{k-j})/J(\eta^i, \eta^{k-i})$ to a character which is of norm one, then it will be used equally many times on the denominator and numerator. Thus, one will use the reflection formula \eqref{reflection} equally many times on the denominator and numerator of $B(\frac{j}M, \frac{k-j}M)/B(\frac{i}M, \frac{k-i}M)$, which yields that the Beta ratio is algebraic.
\end{proof}

\begin{example}\label{ex:[10,1,6,5]}
 Let $p\equiv 1 \pmod{10}$ be prime and $\eta \in \widehat{\F_p^\times}$  of order $10$. Then
$$
   J(\eta,\eta^6)/J(\eta^2,\eta^5)=\eta(-1)J(\eta,\eta^5)/J(\eta^2,\eta^4)=\eta^8(2).
$$
This can be deduced from  \cite[Chapter 3]{Berndt-Evans-Williams} or from \eqref{gauss-norm} and \eqref{H-D}.
In comparison
$$
  B\(\frac 1{10},\frac 6{10}\) \big /B\(\frac 2{10},\frac 5{10}\)=2^{\frac45}.
$$
\end{example}


\section{Counting points on generalized Legendre curves} \label{counting}

\subsection{Preliminaries}

\noindent Let $p$ be an odd prime and $q=p^s.$  We recall the $_2F_1$ hypergeometric function over $\fq$:

\begin{defn}[\cite{Greene}, Def. 3.5]\label{hg}
For  $A$, $B$, and $C$ in $\widehat{\mathbb{F}_{q}^{\times}}$ and $\lambda\in\fq$, define
$$\hgq{A}{B}{C}{\lambda}:=\varepsilon(\lambda) \frac{BC(-1)}{q} \sum_{x\in\fq} B(x)\overline{B}C(1-x)\overline{A}(1-\lambda x).$$
\end{defn}

\begin{theorem}[\cite{Greene}, Thm. 3.6]\label{other 2F1}
{Assume notation as above. Then,}
$$\hgq{A}{B}{C}{\lambda}
= \frac{q}{q-1} \sum_{\chi\in\widehat{\mathbb{F}_{q}^{\times}}} \binom{A\chi}{\chi} \binom{B\chi}{C\chi} \chi(\lambda),$$ where $\displaystyle \binom{A}{B}:=\frac{B(-1)}{q}J(A,\overline{B})=\frac{B(-1)}{q} \sum_{x\in \fq} A(x)\overline{B}(1-x).$
\end{theorem}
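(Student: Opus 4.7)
The plan is to derive the claimed formula from the defining sum in Definition \ref{hg} by a multiplicative Fourier expansion of one of the three character factors. The natural factor to expand is $\overline{A}(1-\lambda x)$, since it is the only one carrying $\lambda$; after expansion the $\lambda$-dependence will separate out as $\chi(\lambda)$, matching the target shape.

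Concretely, I would apply the Fourier inversion formula on $\widehat{\F_q^\times}$: for any $f:\F_q\to\C$ extended by $f(0)=0$, one has $f(y) = \frac{1}{q-1}\sum_\chi \hat f(\chi)\chi(y)$ with $\hat f(\chi) = \sum_{y\in\F_q}f(y)\overline\chi(y)$. Taking $f(y)=\overline{A}(1-y)$ produces a Jacobi sum $J(\overline{\chi},\overline{A})$, which by the definition of the binomial symbol translates into a scalar multiple of $\binom{A\chi}{\chi}$ with explicit $\chi(-1)$ and $A(-1)$ factors. Substituting this expansion with $y=\lambda x$ back into Greene's defining sum for $\hgq{A}{B}{C}{\lambda}$ and swapping the order of summation, the remaining inner sum becomes
\[
\sum_{x\in\F_q}(B\chi)(x)\,(\overline{B}C)(1-x) \;=\; J(B\chi,\overline{B}C),
\]
which once again converts to a scalar multiple of $\binom{B\chi}{C\chi}$. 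Collecting the prefactor $\frac{BC(-1)}{q}$ from Definition \ref{hg}, the $\frac{1}{q-1}$ from Fourier inversion, and the various sign factors hidden in the two Jacobi-to-binomial conversions, the $\chi(-1)$-twists cancel in pairs — this being precisely the motivation behind Greene's $B(-1)/q$ normalization of $\binom{A}{B}$ — leaving exactly $\frac{q}{q-1}\binom{A\chi}{\chi}\binom{B\chi}{C\chi}\chi(\lambda)$ as desired.

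The main obstacle is the careful bookkeeping of the sign and normalization constants, together with the handling of degenerate characters where Jacobi sums either drop rank or pick up boundary contributions at $x=0,1$ (for instance $\chi=\varepsilon$, $A\chi=\varepsilon$, or $\overline{B}C\chi=\varepsilon$). I would isolate each such $\chi$ and verify directly that the exceptional terms match the corresponding degenerate terms on the left-hand side. As sanity checks, I would test the identity at $\lambda=0$, where the left-hand side vanishes through the $\varepsilon(\lambda)$ factor while the right-hand side is supported only at $\chi=\varepsilon$, and at $\lambda=1$, where both sides can be compared against the finite-field analogue of Gauss's classical $_2F_1(a,b;c;1)$ evaluation.
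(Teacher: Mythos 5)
The paper does not prove this statement---it is quoted verbatim from Greene's Theorem~3.6---so there is no internal proof to compare against, and your outline is correct: it is essentially Greene's own argument, expanding $\overline{A}(1-\lambda x)$ via the finite-field binomial theorem (multiplicative Fourier inversion, with the $\delta$-correction at $0$ absorbed by $B(0)=0$ and $\varepsilon(\lambda)$) and then converting the two resulting Jacobi sums $J(\overline{\chi},\overline{A})$ and $J(B\chi,\overline{B}C)$ into $q\binom{A\chi}{\chi}$ and $BC(-1)\,q\binom{B\chi}{C\chi}$, so that the $BC(-1)$ factors cancel. The only remaining work is exactly what you flag: the finitely many $\chi$ for which a character product becomes trivial and the Gauss-sum factorization of the Jacobi sum fails, which must be checked term by term.
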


\begin{lemma}\label{2f1_jacobi1}  If $A, B, A\overline{C}, B\overline{C}\neq \varepsilon$
 $$
   J(A,\overline C)\cdot \hgq ABC\lambda=J(B,\overline C)\cdot \hgq BAC\lambda.
$$
\end{lemma}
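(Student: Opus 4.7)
The plan is to expand both $\hgq{A}{B}{C}{\lambda}$ and $\hgq{B}{A}{C}{\lambda}$ via Greene's formula (Theorem~\ref{other 2F1}) and to verify the equality termwise in the resulting character sum. Since
\[
\hgq{A}{B}{C}{\lambda}=\frac{q}{q-1}\sum_{\chi\in\fqhat}\binom{A\chi}{\chi}\binom{B\chi}{C\chi}\chi(\lambda),
\]
it suffices to prove that for each $\chi\in\fqhat$,
\[
J(A,\overline C)\binom{A\chi}{\chi}\binom{B\chi}{C\chi}=J(B,\overline C)\binom{B\chi}{\chi}\binom{A\chi}{C\chi}.
\]

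Next, I would substitute $\binom{X}{Y}=\tfrac{Y(-1)}{q}J(X,\overline Y)$ into both sides. Since $\chi(-1)^2=1$, both sides acquire the same scalar prefactor $C(-1)/q^2$, which cancels, reducing the identity to
\[
J(A,\overline C)\,J(A\chi,\overline\chi)\,J(B\chi,\overline{C\chi})=J(B,\overline C)\,J(B\chi,\overline\chi)\,J(A\chi,\overline{C\chi}).
\]
I would then apply $J(\psi_1,\psi_2)=g(\psi_1)g(\psi_2)/g(\psi_1\psi_2)$ to each Jacobi sum. The hypotheses $A,B,A\overline C,B\overline C\neq \varepsilon$ are precisely what guarantees that every character product landing in a denominator, namely $A\overline C$, $A$, $B\overline C$ on the left and $B\overline C$, $B$, $A\overline C$ on the right, is nontrivial. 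The factor $g(A)$ in the numerator of $J(A,\overline C)$ then cancels the $g(A)$ in the denominator of $J(A\chi,\overline\chi)$, and symmetrically $g(B)$ cancels on the right. Both sides thereby collapse to the manifestly symmetric expression
\[
\frac{g(\overline C)\,g(\overline\chi)\,g(A\chi)\,g(B\chi)\,g(\overline{C\chi})}{g(A\overline C)\,g(B\overline C)},
\]
which yields the identity.

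The main bookkeeping point is to handle those $\chi$ for which one of $\chi,\,A\chi,\,B\chi,\,C\chi$ degenerates to $\varepsilon$, in which case the Gauss sum formula above must be interpreted via $g(\varepsilon)=-1$ and $J(\psi,\varepsilon)=-1$ for $\psi\neq\varepsilon$. I would check directly that in each of these cases the symbolic cancellation still goes through; the standing hypotheses $A,B,A\overline C,B\overline C\neq\varepsilon$ were chosen precisely so that one never encounters the genuinely exceptional case $\psi_1\psi_2=\varepsilon$ with both $\psi_1,\psi_2$ nontrivial (where the simple Jacobi-to-Gauss identity fails). Once those degenerate $\chi$ are dispatched, the claim follows immediately from the symmetric form above, with no further input beyond Greene's definitions.
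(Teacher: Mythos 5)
Your proof is correct and takes essentially the same route as the paper: both expand the two hypergeometric functions via Greene's formula (Theorem \ref{other 2F1}) and verify the identity termwise using the Jacobi--Gauss relation $J(\psi_1,\psi_2)=g(\psi_1)g(\psi_2)/g(\psi_1\psi_2)$, with the hypotheses $A,B,A\overline C,B\overline C\neq\varepsilon$ guaranteeing that every character product appearing in a denominator is nontrivial. The paper merely compresses your termwise cancellation into the single relation $J(A,\overline C)g(B)g(A\overline C)=J(B,\overline C)g(A)g(B\overline C)$.
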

\begin{proof}
 By Theorem \ref{other 2F1},
$$
   \hgq{A}{B}{C}{\lambda}=\frac{C(-1)}{q(q-1)} \sum_{\chi\in \widehat{\F_q^\times}}J(A\chi,\overline\chi)J(B\chi,\overline {C\chi})\chi(\lambda).
$$
Then the Lemma follows from the relation between Jacobi and Gauss sums, in particular
$$
  J(A,\overline C)g(B)g(A\overline C)=J(B,\overline C)g(A)g(B\overline C).
$$
\end{proof}
\noindent As an immediate consequence when $C=\varepsilon$, we have the following corollary.

\begin{corollary}\label{cor:8}
For any $A,B\in \widehat{\F_q^\times}$,
$$
   \hgq AB\varepsilon\lambda=\, \hgq {B}{A}{\varepsilon}\lambda.
$$
\end{corollary}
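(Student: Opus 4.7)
The plan is to invoke Lemma \ref{2f1_jacobi1} with $C=\varepsilon$ and show that the Jacobi sum prefactors on both sides coincide, making them cancel. Setting $C=\varepsilon$ in the lemma, the hypotheses $A\overline C, B\overline C\neq \varepsilon$ reduce to $A,B\neq \varepsilon$, and the identity becomes
\[
J(A,\varepsilon)\cdot \hgq{A}{B}{\varepsilon}{\lambda} \;=\; J(B,\varepsilon)\cdot \hgq{B}{A}{\varepsilon}{\lambda}.
\]
The first step is therefore to evaluate $J(\chi,\varepsilon)$ for a nontrivial character $\chi \in \widehat{\F_q^\times}$. Using the convention $\chi(0)=0$ and the definition \eqref{eq:J}, one has $J(\chi,\varepsilon)=\sum_{x\in\fq}\chi(x)\varepsilon(1-x)=\sum_{x\neq 0,1}\chi(x)=-\chi(1)=-1$, since the sum of a nontrivial character over $\fq^\times$ vanishes. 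Thus $J(A,\varepsilon)=J(B,\varepsilon)=-1$, and the two prefactors cancel to give the desired symmetry whenever $A,B\neq \varepsilon$.

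It then remains to handle the degenerate cases $A=\varepsilon$ or $B=\varepsilon$. For this, the cleanest route is to appeal directly to Theorem \ref{other 2F1}: with $C=\varepsilon$ we have
\[
\hgq{A}{B}{\varepsilon}{\lambda} \;=\; \frac{q}{q-1}\sum_{\chi\in\widehat{\fq^\times}}\binom{A\chi}{\chi}\binom{B\chi}{\chi}\chi(\lambda),
\]
which is manifestly symmetric in the pair $(A,B)$. This covers all characters simultaneously and in fact supplies an alternative proof of the whole corollary; one may choose to present only this symmetry argument for brevity.

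There is no real obstacle here — the only mildly subtle point is noticing that $J(\chi,\varepsilon)=-1$ (rather than $0$) under the convention $\chi(0)=0$, so that Lemma \ref{2f1_jacobi1} does not become vacuous when applied with $C=\varepsilon$. Once this is in hand, the corollary is immediate.
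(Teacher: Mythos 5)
Your proposal is correct and follows the same primary route as the paper, which simply derives the corollary from Lemma \ref{2f1_jacobi1} by setting $C=\varepsilon$. You are in fact more careful than the paper on two points it leaves implicit: you verify that the two prefactors agree, $J(A,\varepsilon)=J(B,\varepsilon)=-1$ for nontrivial $A,B$ under the convention $\varepsilon(0)=0$ (so the lemma is not vacuous and the factors genuinely cancel), and you cover the degenerate cases $A=\varepsilon$ or $B=\varepsilon$, which are excluded by the lemma's hypotheses, by observing that Theorem \ref{other 2F1} with $C=\varepsilon$ gives $\frac{q}{q-1}\sum_{\chi}\binom{A\chi}{\chi}\binom{B\chi}{\chi}\chi(\lambda)$, manifestly symmetric in $A$ and $B$. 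That second observation is a clean, self-contained proof of the full statement on its own; either presentation is fine.
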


We are now ready to prove one of the key results that we will use later.
\begin{prop}\label{prop:9}
If $A, B, C \in \widehat{\F_q^\times}$, $A,B\neq \varepsilon$, $A,B\neq C$, and $\l\in \fq\setminus \{ 0,1\}$, then
\begin{equation*}
\hgq ABC\lambda=
AB(-1)\overline C(-\lambda)C\overline{AB}(1-\lambda)\frac{J(B,\overline B C)}{J(A,\overline A C)}\hgq {\overline A}{\overline B}{\overline C}\lambda.
\end{equation*}
\end{prop}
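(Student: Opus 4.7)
My plan is to derive Proposition \ref{prop:9} by chaining a single substitution in Greene's defining sum with one invocation of Lemma \ref{2f1_jacobi1}, and then simplifying the resulting scalar via a Gauss-sum functional equation.

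Start from Definition \ref{hg}, $\hgq{A}{B}{C}{\lambda}=\varepsilon(\lambda)\frac{BC(-1)}{q}\sum_{x\in\F_q}B(x)\bar BC(1-x)\bar A(1-\lambda x)$, and perform the involutive M\"obius substitution $x=(\lambda z-1)/(\lambda(z-1))$ on $\F_q\setminus\{1\}$. The point $z=1$ contributes nothing to the sum since $\bar BC(0)=0$ under the hypothesis $B\neq C$, and the points $x=0,1/\lambda$ (where the integrand may vanish) correspond to $z=1/\lambda,0$. One computes $1-x=(1-\lambda)/(\lambda(z-1))$ and $1-\lambda x=z(1-\lambda)/(z-1)$, so each of $B(x)$, $\bar BC(1-x)$, $\bar A(1-\lambda x)$ becomes a product of characters evaluated at $-1$, $\lambda$, $1-\lambda$, $z$, $1-z$, and $1-\lambda z$. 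Regrouping these factors and identifying the resulting sum over $z$ with the definition of $\hgq{\bar B}{\bar A}{\bar C}{\lambda}$ produces
\[
\hgq{A}{B}{C}{\lambda}=C(-1)\,\bar C(\lambda)\,C\bar{AB}(1-\lambda)\,\hgq{\bar B}{\bar A}{\bar C}{\lambda}.
\]

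Next I swap the first two entries of the right-hand $_2F_1$ via Lemma \ref{2f1_jacobi1} applied with parameters $(\bar A,\bar B,\bar C)$, obtaining
\[
\hgq{\bar B}{\bar A}{\bar C}{\lambda}=\frac{J(\bar A,C)}{J(\bar B,C)}\,\hgq{\bar A}{\bar B}{\bar C}{\lambda}.
\]
It then remains to reduce $J(\bar A,C)/J(\bar B,C)$ to the ratio appearing in the proposition. Writing each Jacobi sum as $J(X,Y)=g(X)g(Y)/g(XY)$ (valid since $A,B\neq C$ keeps $\bar AC,\bar BC$ nontrivial), and applying the reflection relation $g(X)g(\bar X)=X(-1)q$ for $X=A$ and $X=B$ (valid since $A,B\neq\varepsilon$), gives $J(\bar A,C)/J(\bar B,C)=AB(-1)\cdot J(B,\bar BC)/J(A,\bar AC)$. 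Combining the three identities and using $C(-1)=\bar C(-1)$ to merge $C(-1)\bar C(\lambda)\cdot AB(-1)$ into $AB(-1)\bar C(-\lambda)$ assembles the prefactor into the exact form stated.

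The main obstacle I foresee is bookkeeping: the substitution step requires tracking roughly a dozen character evaluations and reducing their product multiplicatively without dropping a sign, and it is essential to identify the sum over $z$ with the correct $\hgq{\bar B}{\bar A}{\bar C}{\lambda}$ (not $\hgq{\bar A}{\bar B}{\bar C}{\lambda}$), which is why Lemma \ref{2f1_jacobi1} is needed afterwards. The genuine mathematical content lies in choosing the correct involution (namely one that simultaneously inverts all three characters $A,B,C$ while preserving $\lambda$) and in the Gauss-sum reflection; everything else is organization.
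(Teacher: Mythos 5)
Your proof is correct, and it splits from the paper's proof exactly at the first step. Both arguments share the same skeleton: first establish the symmetry $\hgq{A}{B}{C}{\lambda}=\overline C(-\lambda)\,C\overline{AB}(1-\lambda)\,\hgq{\overline B}{\overline A}{\overline C}{\lambda}$, then swap the upper parameters via Lemma \ref{2f1_jacobi1} applied to $(\overline A,\overline B,\overline C)$, then normalize the Jacobi-sum ratio. For the first step the paper quotes and chains Greene's transformation formulas (Theorem 4.4 (i) and (iv) of \cite{Greene}), whereas you prove that identity from scratch by the involution $x=(\lambda z-1)/(\lambda(z-1))$ in Greene's defining sum; I checked your substitution ($1-x=(1-\lambda)/(\lambda(z-1))$, $1-\lambda x=z(1-\lambda)/(z-1)$, and the sign bookkeeping $BC(-1)\cdot ABC(-1)\cdot AC(-1)=C(-1)$) and it lands exactly on the paper's intermediate identity, so your route is self-contained where the paper's relies on citation --- at the cost of the character bookkeeping you acknowledge. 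For the final normalization the two arguments are equivalent in content: the paper uses $J(A,\overline C)J(\overline A,C)=q$ together with $J(\chi,\psi\overline\chi)=\chi(-1)J(\chi,\overline\psi)$, while you expand $J(\overline A,C)/J(\overline B,C)$ into Gauss sums and apply $g(X)g(\overline X)=X(-1)q$ for $X=A,B$; these are the same facts in different clothing, and both use the hypotheses $A,B\neq\varepsilon$ and $A,B\neq C$ in the same places. One small slip in your write-up: it is the term $x=1$ (the image of $z=\infty$) that drops from the original sum because $\overline BC(0)=0$ when $B\neq C$, while the excluded point $z=1$ is harmless in the \emph{new} sum because its summand contains $A\overline C(1-z)=A\overline C(0)=0$, which uses the other hypothesis $A\neq C$; you conflated the two and cited only $B\neq C$, but both hypotheses are available, so this is a bookkeeping correction rather than a gap. (Both your argument and the paper's implicitly take $C\neq\varepsilon$ in the Jacobi--Gauss conversions; the case $C=\varepsilon$ is the content of Corollary \ref{cor:8}.)
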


\begin{proof}
The key ingredients of this proof  are the transformation formulas
Theorem 4.4 parts (i) and  (iv) in Greene's paper \cite{Greene} and the identity in Lemma \ref{2f1_jacobi1}.  Greene's formulas are stated as
\begin{align*}
   \hgq ABC\lambda&= A(-1)\hgq AB{AB\overline C}{1-\lambda}\\
            &\qquad +A(-1)\CC{B}{\overline AC}\delta(1-\l)-\CC BC\delta(\l)\\
   \hgq ABC\lambda&= C(-1)C\overline{AB}(1-\l)\hgq {C\overline A}{C\overline B}{C}{\lambda}\\
    &\qquad +A(-1)\CC{B}{\overline AC}\delta(1-\l),
\end{align*}
where
$$
  \delta(x)=
  \begin{cases}
    1,\, & \mbox{ if }  x=0,\\
    0,\, & \mbox{ if }  x\neq0.
  \end{cases}
$$

For $\lambda\neq0,1$, from the transformation formulas, we have
  \begin{align*}
     \hgq ABC\lambda&= A(-1)\hgq AB{AB\overline C}{1-\lambda}\\
             &=B\overline{C}(-1)\overline C(\lambda)\hgq {B\overline C}{A\overline C}{AB\overline C}{1-\lambda}\\
             &=\overline C(\lambda)\hgq {B\overline C}{A\overline C}{\overline C}{ \l }\\
             &= \overline C(-\lambda)\overline{AB}C(1-\lambda)\hgq{\overline B}{\overline A}{\overline C}{\lambda}.
  \end{align*}
By Lemma \ref{2f1_jacobi1} and the fact $J(A,\overline C)J(\overline A,C)=J(B,\overline C)J(\overline B,C)=q$, one has
$$
  J(A,\overline C)\hgq ABC\lambda=\overline C(-\lambda)C\overline{AB}(1-\lambda)J(B,\overline C)\hgq {\overline A}{\overline B}{\overline C}\lambda.
$$
Our claim follows from the fact that
$$
  J(\chi,\psi\overline \chi)=\chi(-1)J(\chi,\overline \psi), \mbox{ if } \psi, \chi\neq \varepsilon.
$$
\end{proof}

\subsection{Desingularization for $C_{\lambda}^{[N;i,j,k]}$}
Recall that,  $C_{\lambda}^{[N;i,j,k]}$ has $4$ singularities: $0, 1,\infty,\frac1{\l}$. Upon choosing a suitable uniformizer $s$ at each singularity, the curve is of the form $t^N=s^nf(s)$ with $f(s)\in \Q(\l)[s], f(0)\neq 0$ where $t$ is a rational function of $s$ and $y$. For instance {near zero, letting $s=x$ and $t=y$,} it is $t^N=s^if(s)$, with $f(s)=(1-s)^j(1-\l s)^k$. Similarly, near infinity, we let $s=\frac 1x$ and  $t=\frac{y}{s^e}$ where $e=\left [ \frac{i+j+k}N \right ]+1$ to obtain $t^N=s^{n}(s-1)^j(s-\l)^k$ with $n=N+N \left [ \frac{i+j+k}N \right ]-i-j-k$. (We omit the similar discussion for the other two singularities $1,\frac 1 \l$.)

Now we will resolve the singularity of $t^N=s^nf(s)$ at $s=0$. By the division algorithm, there are two integers $u,v$ such that $Nu-nv=\gcd(N,n)$. Letting $t=w^u$ and $s=w^v$, $t^N=s^nf(s)$ can be rewritten as $w^{\gcd(N,n)}=f(w^v)$. Over a finite field $\fq$ and when $s=0$, i.e. $w=0$, the point $(s,y)=(0,0)$ is resolved to $\displaystyle \#\{w\in \fq \mid w^{\gcd(N,n)}=f(0)\}$ points.  We let $n_0,n_1,n_\infty,n_{\frac1\l}$ denote this number at the corresponding singularities.  For instance, $\displaystyle  n_0=\#\{w\in \fq :  w^{\gcd(N,i)}=1\}.$

\subsection{Counting points on $X_{\lambda}^{[N;i,j,k]}$ over finite fields}

There are multiple ways in which one can use Gaussian hypergeometric functions to count points on varieties, and these functions are also related to coefficients of various modular forms including Siegel modular forms \cite{Ahlgren-Ono, Barman, FOP, Fuselier, Koike, Lennon-count, MP, Ono, Ve11}. For our purposes, we use a technique similar to one shown in \cite{Ve11}.  First, recall the following well-known and useful lemma:

\begin{lemma}[\cite{IR90}, Prop. 8.1.5]\label{count_lemma}
  If $a\in\fq^{\times}$ and $n\mid(q-1),$ then $$\#\{ x\in\fq : x^n=a \} =\sum_{\chi^n=\varepsilon}\chi(a),$$ where the sum runs over all characters $\chi\in\fq ^\times$ whose order divides $n$. \end{lemma}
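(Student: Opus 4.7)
The plan is to prove the counting formula by exploiting the character group structure of $\fq^\times$. Since $\fq^\times$ is cyclic of order $q-1$ and $n\mid (q-1)$, the character group $\widehat{\fq^\times}$ is cyclic of the same order, so the characters $\chi$ satisfying $\chi^n=\varepsilon$ form a subgroup $H\subset \widehat{\fq^\times}$ of order exactly $n$. This subgroup is precisely the annihilator of the subgroup of $n$-th powers $(\fq^\times)^n$, which has index $n$ in $\fq^\times$.

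First I would split into two cases according to whether $a\in \fq^\times$ is an $n$-th power. If $a=b^n$ for some $b\in \fq^\times$, then for every $\chi\in H$ we have $\chi(a)=\chi(b)^n=\chi^n(b)=\varepsilon(b)=1$, so $\sum_{\chi\in H}\chi(a)=|H|=n$. On the other hand, the fiber of the $n$-th power map $x\mapsto x^n$ over $a$ is a coset of the kernel, which has size $\gcd(n,q-1)=n$, so $\#\{x\in\fq:x^n=a\}=n$ as well.

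Next, if $a$ is not an $n$-th power, then the number of solutions is obviously $0$. For the character sum, the standard orthogonality argument applies: since $(\fq^\times)^n$ has index $n$ and $a\notin (\fq^\times)^n$, there exists some $\chi_0\in H$ with $\chi_0(a)\ne 1$. Multiplying by $\chi_0$ permutes $H$, so
\[
\chi_0(a)\sum_{\chi\in H}\chi(a)=\sum_{\chi\in H}(\chi_0\chi)(a)=\sum_{\chi\in H}\chi(a),
\]
which forces the sum to be $0$. Combining the two cases yields the claimed equality; I would also note the convention $\chi(0)=0$ extends the identity trivially if one wishes to include $a=0$.

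There is no real obstacle here: the argument is a textbook application of character orthogonality on the finite cyclic group $\fq^\times$, and the only thing one must verify carefully is the bijection between $H$ and the character group of the quotient $\fq^\times/(\fq^\times)^n$, which follows immediately from cyclicity and the divisibility $n\mid (q-1)$.
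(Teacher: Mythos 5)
Your proof is correct and is the standard orthogonality argument; the paper offers no proof of its own, simply citing Ireland--Rosen (Prop.\ 8.1.5), where essentially this same argument appears. Nothing to add.
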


Also, note that
\begin{equation}\label{eq:n0}
n_0-1=\sum_{m=1}^{\gcd(N,i)-1}\chi^m(1),
\end{equation}
where $\chi$ is any order $\gcd(N,i)$ character of $\fq ^\times$. {There are similar formulas for $n_1-1,n_{\frac1\l}-1,n_\infty-1$.}

\begin{theorem}\label{main count}
Let $p>3$ be prime and $q=p^s\equiv 1\pmod{N}$, and let $i,j,k$ be natural numbers with $1\leq i,j,k <N$. Further, let $\xi\in\fqhat$ be a character of order $N$. Then for $\lambda\in \fq\setminus \{0,1\} ,$

\begin{multline*}
   \# X_{\lambda}^{[N;i,j,k]}  (\fq)=1+q+q \sum_{m=1}^{N-1} \xi^{mj}(-1) \hgq{\xi^{-km}}{\xi^{im}}{\xi^{m(i+j)}}{\lambda}\\\hspace*{-1.5in}+n_0+n_1+n_{\frac1\l}+n_\infty-4,
 \end{multline*}
where $n_0,n_1,n_{\frac1\l},n_\infty$ are as before.
\end{theorem}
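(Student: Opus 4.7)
The plan is to count points on $X_\lambda^{[N;i,j,k]}(\fq)$ in two stages: first evaluate the number of affine points on $C_\lambda^{[N;i,j,k]}(\fq)$ via a character sum, and then correct for the desingularization over the four branch locations $x = 0, 1, 1/\lambda, \infty$. For each $x \in \fq$, the number of $y \in \fq$ satisfying $y^N = x^i(1-x)^j(1-\lambda x)^k$ can be read off: when $x \in \{0, 1, 1/\lambda\}$ the right-hand side vanishes and $y=0$ is the only solution, contributing the three singular affine points; otherwise the right-hand side lies in $\fq^\times$, and since $q \equiv 1 \pmod{N}$, Lemma \ref{count_lemma} together with multiplicativity gives
\[
\#\{y : y^N = x^i(1-x)^j(1-\lambda x)^k\} = \sum_{m=0}^{N-1}\xi^{mi}(x)\xi^{mj}(1-x)\xi^{mk}(1-\lambda x).
\]
The convention $\chi(0)=0$ forces both sides to vanish at $x\in\{0,1,1/\lambda\}$, so extending to a sum over all of $\fq$ costs nothing.

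Next, I would swap the two summations. The $m=0$ term contributes $\sum_{x \in \fq}\varepsilon(x)\varepsilon(1-x)\varepsilon(1-\lambda x) = q-3$. For each $1 \le m \le N-1$, I match the inner $x$-sum to Greene's ${}_2F_1$ via Definition \ref{hg} by setting
\[
B = \xi^{mi}, \qquad C = \xi^{m(i+j)}, \qquad A = \xi^{-mk},
\]
so that $\overline{B} C = \xi^{mj}$ and $\overline{A} = \xi^{mk}$. Since $\lambda \neq 0$ we have $\varepsilon(\lambda)=1$, and the normalizing character value reduces as $BC(-1) = \xi^{m(2i+j)}(-1) = \xi^{mj}(-1)$ because $\xi^{2mi}(-1) = (\xi^{mi}(-1))^2 = 1$. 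Rearranging Definition \ref{hg} and using $BC(-1)^2 = 1$ yields
\[
\sum_{x \in \fq}\xi^{mi}(x)\xi^{mj}(1-x)\xi^{mk}(1-\lambda x) = q\,\xi^{mj}(-1)\,\hgq{\xi^{-mk}}{\xi^{mi}}{\xi^{m(i+j)}}{\lambda}.
\]
Adding back the three singular affine points, the affine point count of $C_\lambda^{[N;i,j,k]}(\fq)$ becomes $q + q\sum_{m=1}^{N-1}\xi^{mj}(-1)\hgq{\xi^{-mk}}{\xi^{mi}}{\xi^{m(i+j)}}{\lambda}$.

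Finally, to pass to the smooth model $X_\lambda^{[N;i,j,k]}$, I would remove the three singular affine points at $x = 0, 1, 1/\lambda$ and replace them by the $n_0, n_1, n_{1/\lambda}$ points produced by the desingularization analysis of Section \ref{counting}, and then add the $n_\infty$ points lying above $x = \infty$. This replaces $3$ with $n_0 + n_1 + n_{1/\lambda} + n_\infty$, and rewriting $q - 3$ as $1 + q - 4$ produces exactly the claimed identity. The delicate points are the bookkeeping at the four branch locations (which is precisely what the four $n_\bullet$ corrections encode) and the identification of $(A,B,C)$ so that the normalization $BC(-1)$ reduces cleanly to $\xi^{mj}(-1)$; once those are in place the proof is a direct character-sum manipulation.
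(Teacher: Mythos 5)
Your argument is correct and follows essentially the same route as the paper: count affine points via Lemma \ref{count_lemma}, isolate the trivial character's contribution of $q-3$, match each $m\ge 1$ term to Greene's definition with $(A,B,C)=(\xi^{-km},\xi^{im},\xi^{m(i+j)})$ so that $BC(-1)$ collapses to $\xi^{mj}(-1)$, and correct at the four branch points by the $n_\bullet$'s. The only wording slip is the claim that \emph{both} sides of your character-sum identity vanish at $x\in\{0,1,1/\lambda\}$ --- the left side $\#\{y\in\fq : y^N=0\}$ equals $1$ there, not $0$ --- but since you separately add back those three singular affine points before desingularizing, the bookkeeping still closes and the final identity is exactly the one claimed.
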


\begin{proof}

At infinity, there are $n_\infty$ points which correspond to $\frac 1x=0$.  There are an additional $n_0+n_1+n_{\frac1\l}-3$ points from other singularities. Thus,
\begin{align*}
 \#X_{\lambda}&^{[N;i,j,k]} (\fq)  \\
 =&\sum_{x\in\fq} \#\{y\in\fq : y^N=x^i(1-x)^j(1-\lambda x)^k\}+n_\infty+n_0+n_1+n_{\frac1\l}-3\\
=& \sum_{{x\in\fq}} \#\{y\in\fq^{\times} : y^N=x^i(1-x)^j(1-\lambda x)^k\} \\
& + \# \{ x\in\fq : x^i(1-x)^j(1-\lambda x)^k=0\}+n_\infty+n_0+n_1+n_{\frac1\l}-3.\\
=& \sum_{x\in\fq} \#\{y\in\fq^{\times} : y^N=x^i(1-x)^j(1-\lambda x)^k\}  +n_\infty+n_0+n_1+n_{\frac1\l}.
\end{align*}

Now we apply Lemma \ref{count_lemma} to the first sum, recalling that $\xi$ is of order $N$.
\begin{multline*}
 \#X_{\lambda}^{[N;i,j,k]}  (\fq) = \sum_{x\in\fq}\sum_{m=0}^{N-1} \xi^m(x^i(1-x)^j(1-\lambda x)^k)  +n_\infty+n_0+n_1+n_{\frac1\l}\\
 =  \sum_{x\in\fq} \varepsilon(x^i(1-x)^j(1-\lambda x)^k) + \sum_{x\in\fq}\sum_{m=1}^{N-1} \xi^m(x^i(1-x)^j(1-\lambda x)^k) \\
+n_\infty+n_0+n_1+n_{\frac1\l}\\
 =  q-3 + \sum_{x\in\fq}\sum_{m=1}^{N-1} \xi^m(x^i(1-x)^j(1-\lambda x)^k) +n_\infty+n_0+n_1+n_{\frac1\l}\\
=1+q + \sum_{x\in\fq}\sum_{m=1}^{N-1} \xi^m(x^i(1-x)^j(1-\lambda x)^k) +n_\infty+n_0+n_1+n_{\frac1\l}-4.
\end{multline*}

Meanwhile, using Definition \ref{hg}, we have

\begin{eqnarray*}
&&q \cdot \, \hgq{\xi^{-km}}{\xi^{im}}{\xi^{m(i+j)}}{\lambda} \\& =& \varepsilon(\lambda) \xi^{m(2i+j)}(-1)\sum_{x\in\fq} \xi^{im} (x)\overline{\xi^{im}} \xi^{m(i+j)} (1-x) \overline{\xi^{-km}} (1-\lambda x) \\
& = &\varepsilon(\lambda) \xi^{mj}(-1)\sum_{x\in\fq} \xi^{m} (x^i(1-x)^j (1-\lambda x)^k).
\end{eqnarray*}
\end{proof}

\subsection{Galois representations}\label{ss:Galois 3.5}  For simplicity, we assume $\l\in \Q$ here. One can construct a compatible family of degree-$2g$ representations
\begin{equation}\label{representation}
\rho: G_{\Q}:=Gal(\overline \Q/\Q)\rightarrow GL_{2g} (\overline \Q_\ell)
\end{equation}
via the Tate module of the Jacobian $J_\l^{[N;i,j,k]}$ of $X_\lambda^{[N;i,j,k]}$.  We now let $\ell$ be a fixed prime.  Recall that $g$ is the genus of $X_\lambda^{[N;i,j,k]}$ as in \eqref{eq:genus}.  We have the lemma below which follows from the relation between the Galois representations and local L-function{s}  for algebraic curves.  Let $p$ be a prime unramified for $\rho$, $q=p^s$ be a power of $p$.  For the conjugacy class of geometric Frobenius $\Frob_q$ in $G_\Q$,
$$-\Tr\rho(\Frob_q)=\sum_{m=1}^{N-1}\sum_{x\in\fq} \xi^m(x^i(1-x)^j(1-\lambda x)^k) +n_\infty+n_0+n_1+n_{\frac1\l}-4.$$ From previous discussions,  $n_0-1=\sum_{m=1}^{\gcd(N,i)-1}\left (\xi^{N/\gcd(N,i)}\right )^m(1)$. We can write $n_\infty-1, n_1-1,n_{\frac1\l}-1$ similarly. Thus we can write $$-\Tr\rho(\Frob_q)=\sum_{m=1}^{N-1}\sum_{x\in\fq}\, ^* \xi^m(x^i(1-x)^j(1-\lambda x)^k),$$ where $*$ means we {disperse the formulas for $n_0-1, n_\infty-1, n_1-1,n_{\frac1\l}-1$ like \eqref{eq:n0}} to summands  corresponding to the characters involved. We  assume that  $N\nmid i+j+k$.
\begin{lemma}\label{lem:10}
 When $(m,N)=1$,
\begin{multline}
  \sum_{x\in\fq}\, ^* \xi^m(x^i(1-x)^j(1-\lambda x)^k)=\sum_{x\in\fq} \xi^m(x^i(1-x)^j(1-\lambda x)^k)\\=q\varepsilon(\l)\xi^{mj}(-1)\cdot \,\hgq{\xi^{-km}}{\xi^{im}}{\xi^{m(i+j)}}{\lambda}.
\end{multline}
\end{lemma}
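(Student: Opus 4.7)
The lemma packages two equalities, and both follow more or less directly from work already done in this section; my plan is to treat them in turn.

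The second equality is just a rearrangement of the computation performed at the end of the proof of Theorem \ref{main count}. There it is shown that
\[
q \cdot \hgq{\xi^{-km}}{\xi^{im}}{\xi^{m(i+j)}}{\lambda}
 = \varepsilon(\lambda)\,\xi^{mj}(-1)\sum_{x\in\fq} \xi^{m}\bigl(x^i(1-x)^j(1-\lambda x)^k\bigr).
\]
Multiplying both sides by $\varepsilon(\lambda)\,\xi^{mj}(-1)$ and using the obvious identities $\varepsilon(\lambda)^2=\varepsilon(\lambda)$ and $\xi^{mj}(-1)^2=1$ (so $\xi^{mj}(-1)^{-1}=\xi^{mj}(-1)$) gives precisely the claimed expression. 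For $\lambda=0$ both sides vanish, so the formula is trivially true there.

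The first equality is where the hypothesis $(m,N)=1$ is used. Recall that the starred sum is obtained from the unstarred one by adding the dispersed contributions of $n_0-1$, $n_1-1$, $n_{1/\lambda}-1$, and $n_\infty-1$, each of which (as in \eqref{eq:n0}) is a sum of the form $\sum_{a=1}^{d-1}\chi^a(c)$ where $\chi$ is a character of order $d$ dividing $N$. For the singularity at $0$ we have $d=\gcd(N,i)$, and similarly $\gcd(N,j)$, $\gcd(N,k)$ at $1$ and $1/\lambda$; for $\infty$, the local exponent is $n=N+N[(i+j+k)/N]-i-j-k\equiv -(i+j+k)\pmod N$, so $d=\gcd(N,i+j+k)$. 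Under our standing hypotheses $1\le i,j,k<N$ and $N\nmid i+j+k$, each of these four values of $d$ is a proper divisor of $N$. Consequently the characters appearing in the starred contributions all have order dividing some proper divisor of $N$, while $\xi^m$ with $(m,N)=1$ has exact order $N$. Hence $\xi^m$ is not among them, and the added starred terms contribute $0$ to the coefficient of $\xi^m$; the starred and unstarred sums coincide.

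There is no real obstacle here: the only subtlety is bookkeeping the character orders produced by the desingularization at each of the four singularities and checking that the assumption $N\nmid i+j+k$ is exactly what excludes $\xi^m$ (of order $N$) from the ``starred'' characters at $\infty$. The hypotheses $1\le i,j,k<N$ handle the other three singular points automatically.
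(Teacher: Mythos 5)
Your proof is correct and follows essentially the same route the paper intends: the lemma is stated without a separate proof precisely because it is the combination of the final computation in the proof of Theorem \ref{main count} with the observation that the dispersed contributions from $n_0-1$, $n_1-1$, $n_{1/\lambda}-1$, $n_\infty-1$ involve only powers of $\xi$ whose orders divide the proper divisors $\gcd(N,i)$, $\gcd(N,j)$, $\gcd(N,k)$, $\gcd(N,i+j+k)$ of $N$ (the last being proper exactly because $N\nmid i+j+k$), hence never the character $\xi^m$ of exact order $N$. One small slip in an aside: at $\lambda=0$ the unstarred sum equals the Jacobi sum $J(\xi^{mi},\xi^{mj})$, which is nonzero since $\xi^{mi},\xi^{mj}\neq\varepsilon$, while the right-hand side vanishes because of the factor $\varepsilon(\lambda)$ --- so the identity is not ``trivially true there''; rather, $\lambda=0,1$ are excluded throughout this section, so the case never arises.
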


Each map $A_\zeta:(x,y)\mapsto (x,\zeta^{-1} y)$ on $C_\l^{[N;i,j,k]}$ induces an automorphism on $X_\l^{[N;i,j,k]}$, $J_\l^{[N;i,j,k]}$ as well as the representation space $W$ of $\rho$, which will be denoted by $A_\zeta^*$ for simplicity. Thus $W$ also decomposes into isotypic spaces $W_m$ of $A_\zeta^*$ for each character $\chi: \zeta_N\mapsto \zeta_N^m$. As $A_\zeta^*$ is defined over $\Q(\zeta_N)$,  the restriction of $\rho$ to $H=Gal(\overline \Q/\Q(\zeta_N))$  decomposes into a direct sum $\displaystyle \oplus_{m=1}^{N-1} \sigma_m$ where $W_m$ is the representation space of  $\sigma_m$ respectively.  For any proper divisor $n$ of $N$, the corresponding Jacobian $J_\l^{[n;i,j,k]}$ is a quotient of $J_\l^{[N;i,j,k]}$. Thus, the Galois representation for $J_\l^{[n;i,j,k]}$ is a subrepresentation of that for $J_\l^{[N;i,j,k]}$. There is a degree $2\varphi(N)$ subrepresentation $\rho^{new}$ of $\rho$ that corresponds to $J_\l^{new}$. Assuming  $End_0(J_\l^{new})$ is not a CM field, we know $\rho^{new}=\text{Ind}_{G_{\Q(\zeta_N)}}^{G_\Q} \sigma_1$ is an induction with $\sigma_1$ being absolutely irreducible for almost all primes $\ell$. It is enough to fix one such $\ell$ to proceed.

{Below we will separate the sub-representations $\sigma_m$ by twisting the corresponding generalized Legendre curves.}
\begin{prop}\label{prop:14}
Given the notation above, suppose $\l\in \Q$ where $End_0(J_\l^{new})$ is not a CM field and is of degree $2\varphi(N)$, $p$ is unramified for $\rho$ such that $\l\not\equiv 0, 1 \pmod p$, and $p$ splits in $\Q(\zeta_N)$. Then, when $(m,N)=1$, the values
$$\Tr {\sigma_m(\Frob_q)} \quad \text{and}  \quad
-\sum_{x\in\fq}\, ^* \xi^m(x^i(1-x)^j(1-\lambda x)^k)$$ agree up to different embeddings of $\Q(\zeta_N)$ in $\C$.
\end{prop}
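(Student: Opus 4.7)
The strategy is to combine the total Lefschetz computation of Section~\ref{ss:Galois 3.5} with the projection onto $A_\zeta^*$-eigenspaces.  Since $p$ splits completely in $\Q(\zeta_N)$, $\Frob_q$ lies in $H = G_{\Q(\zeta_N)}$, so $\rho(\Frob_q)$ respects the decomposition $W = \bigoplus W_m$ and $\Tr\rho(\Frob_q) = \sum_{m=1}^{N-1}\Tr\sigma_m(\Frob_q)$.  Combined with the identity
\[
-\Tr\rho(\Frob_q) = \sum_{m=1}^{N-1}\sum_{x\in\F_q}^{*}\, \xi^m\bigl(x^i(1-x)^j(1-\lambda x)^k\bigr)
\]
derived just before Lemma~\ref{lem:10}, it suffices to match the $m$-th summand on each side when $\gcd(m,N)=1$.

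To isolate the $m$-th piece, I would fix compatible embeddings $\iota_\ell:\Q(\zeta_N)\hookrightarrow\overline{\Q}_\ell$ and $\iota_\infty:\Q(\zeta_N)\hookrightarrow\C$; because $p$ splits in $\Q(\zeta_N)$, the Teichmuller character on $\F_q^\times$ provides a character $\xi$ of order $N$ matching $\zeta_N$ under these embeddings.  The idempotent
\[
P_m \;=\; \frac{1}{N}\sum_{a=0}^{N-1}\zeta_N^{-ma}\,(A_{\zeta_N}^a)^{*}
\]
projects $H^1_{\ell}(X_\lambda)$ onto $W_m$, and since $A_{\zeta_N}$ is defined over $\Q(\zeta_N)$ it commutes with $\Frob_q$; hence $\Tr\sigma_m(\Frob_q) = \Tr(P_m\Frob_q\mid H^1_\ell)$.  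Applying the Grothendieck--Lefschetz formula to each composite $A_{\zeta_N}^a\Frob_q$, averaging against $\zeta_N^{-ma}$, and using orthogonality together with Lemma~\ref{count_lemma} identifies this projected trace with $-\sum_{x\in\F_q}\xi^m\bigl(x^i(1-x)^j(1-\lambda x)^k\bigr)$.

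For $\gcd(m,N)=1$, no correction from the four singular fibers enters the $m$-th summand.  Indeed, the desingularization discussion preceding Theorem~\ref{main count} shows that each $n_\bullet-1$ expands as a sum of evaluations of characters of order $\gcd(N,\bullet)$, which (since $1\le i,j,k<N$ with $N\nmid i+j+k$) have order properly dividing $N$; hence their dispersal into the $\sum^{*}$-notation affects only summands with $\gcd(m,N)>1$.  Thus $\sum_x^{*}\xi^m = \sum_x\xi^m$ for $\gcd(m,N)=1$, and the match is complete.  The ``up to embeddings'' qualifier records that varying the compatible pair $(\iota_\ell,\iota_\infty)$ yields Galois-conjugate versions of the equality.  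The main technical step is the ramification bookkeeping just described; once it is verified, the rest of the argument is a routine application of Lefschetz and character orthogonality.
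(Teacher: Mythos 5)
Your argument is correct, but it takes a genuinely different route from the paper's. The paper separates the components $\sigma_m$ arithmetically: it introduces the twisted curves $y^N=c^m\, x^i(1-x)^j(1-\l x)^k$ for $c$ with $x^N-c$ irreducible over $\fq$, invokes Clifford's theorem to show that the associated degree-$2$ representations $\sigma_{1,m}$ of $G_{\Q(\zeta_N)}$ differ from $\sigma_1$ by characters whose Frobenius values are powers $\mu_{N,1}^m$ of a primitive $N$th root of unity, counts points on each twist, and then inverts a $\varphi(N)\times\varphi(N)$ matrix of roots of unity; the fact that the two matrices $(\xi(c)^{mn})$ and $(\mu_{N,1}^{mn})$ differ by a permutation is precisely the source of the ``up to embeddings'' caveat. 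You instead work geometrically over $\fq$: you project onto the $A_\zeta^*$-eigenspaces with the averaging idempotent and apply the Lefschetz fixed-point formula to the twisted Frobenii $A_{\zeta_N}^a\circ\Frob_q$. Your route avoids the auxiliary family of twists and, notably, the paper's appeal to strong irreducibility of the $\sigma_{1,m}$ (needed there to apply Clifford's theorem); the price is that you must actually establish the twisted analogue of Theorem~\ref{main count}, i.e.\ compute the number of points of the smooth model fixed by $A_{\zeta_N}^a\circ\Frob_q$, including the fibers over $0,1,\frac1\l,\infty$ --- you assert this via ``orthogonality together with Lemma~\ref{count_lemma}'' but do not carry it out. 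That computation is routine (on the affine part the fixed points are those with $x\in\fq$ and $y^{q-1}$ equal to a prescribed power of $\zeta_N$, and the coset-twisted version of Lemma~\ref{count_lemma} produces exactly the factor $\zeta_N^{-an}$ that your averaging then kills for $n\neq m$), and your observation that the branch-point corrections land only in components with $\gcd(m,N)>1$ is correct and is the same bookkeeping that underlies Lemma~\ref{lem:10}. So the proposal goes through and gives a legitimate, arguably more self-contained, alternative to the paper's twisting argument, at the cost of relying on the Lefschetz formula for Frobenius composed with an automorphism rather than on plain point counts of honest curves over $\Q$.
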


\begin{proof}
Assume $q=p^s$ is fixed. Let $c\in \Q$  such that $x^N=c$ is irreducible over $\F_q$. Let $F=\Q(\zeta_N)(\sqrt[N]{c})$ be a degree-$N$  extension of $\Q(\zeta_N)$.

Now consider the family of twisted curves $$X_{m}(\l): \quad y^N=c^m\cdot x^i(1-x)^j(1-\l x)^k,$$ and denote $X_\l^{[N;i,j,k]}$ by  $X_0(\l)$. Over $F$, $X_{m-1}(\l)$ is isomorphic to $X_{m}(\l)$  by the map $T:(x,y)\mapsto (x,\frac{ y}{\sqrt[N]{c}})$. Let $\rho_{m}$ denote the $\ell$-adic representation corresponding to $X_{m}(\l)$ over $\overline{\Q}_\ell$. It has a degree-2$\varphi(N)$ primitive part $\rho_{m}^{new}$, which is also induced from a degree-2 representation $\sigma_{1,m}$ of $G_{\Q(\zeta_N)}$. We can assume $\sigma_{1,m}$ is strongly irreducible. As $\sigma_{1,m}$ and $\sigma_{1}$ are both strongly irreducible and are isomorphic when they are restricted to $G_F$, they differ by a character $\chi_m$ of $G_{\Q(\zeta_N)}$ with kernel $G_F$ (see \cite{Clifford}, Theorem 5). By the choice of $c$, $\chi_m(\Frob_q)$ is a root of unity denoted by $\mu_{N,m}$; it is primitive when $\gcd(m,N)=1$. The map $T$ induces an intertwining operator between $\rho_{m-1}^{new}$ and $\rho_{m}^{new}$. Thus for any $v\in W_1$ the representation space of $\sigma_1$,  $T(\sigma_1(\Frob_q) \cdot v)= \mu_{N,1}\sigma_1(\Frob_q)\cdot (T v)$ and  $T^2(\sigma_1(\Frob_q)\cdot v)= T( \mu_{N,1}\sigma_1(\Frob_q)\cdot Tv))= \mu_{N,1}T(\sigma_1(\Frob_q)\cdot Tv))=\mu_{N,1}^2\sigma_1(\Frob_q)\cdot (T^2 v)$, so $\sigma_{1,2}(\Frob_q)=\mu_{N,1}^2\sigma_1(\Frob_q)$. By induction, $\mu_{N,m}=\mu_{N,1}^m$.

From  the counting points on finite field perspective,
\begin{align*}
-\Tr \rho_{m}^{new}(\Frob_q)&=\sum_{(n,N)=1} \sum_{x\in\fp} \xi^n(c^mx^i(1-x)^j(1-\l x)^k)\\&=\sum_{(n,N)=1} (\xi^{n}(c))^m\sum_{x\in\fq} \xi^n(x^i(1-x)^j(1-\lambda x)^k).
\end{align*}
Since $\xi$ has order $N$ and by the choice of $c$, $\xi(c)$ is a primitive $N$th root of unity. Meanwhile, we can also compute the same quantity using  $\rho_{m}^{new}$ being induced from $\sigma_{1,m}$. Combining them, we get a system of $\varphi(N)$ equations when we let $m\in (\Z/N\Z)^{\times}$. Writing the system in the matrix form, we have on one hand a $\varphi(N)\times \varphi(N)$ matrix $(\xi(c)^{mn})_{m,n \in (\Z/N\Z)^\times}$ times a column vector with entries $-\sum_{x\in\fp} \xi^n(x^i(1-x)^j(1-\lambda x)^k)$, with $(n,N)=1$; on the other hand, we have another  $\varphi(N)\times \varphi(N)$ matrix $(\mu_{N,1}^{mn})_{m,n\in (\Z/N\Z)^\times}$ times a column vector with entries $\Tr \sigma_n(\Frob_p)$. Our claims follow from the fact that the matrices are both invertible and are different by a permutation matrix.
\end{proof}

\begin{corollary}
Given assumptions as above,  $$\Tr {\sigma_m(\Frob_q)}\quad \text{and} \quad -\hgq{\xi^{-km}}{\xi^{im}}{\xi^{m(i+j)}}{\lambda}  \cdot \xi^{mj}(-1)q$$ agree up to different embeddings of $\Q(\zeta_N)$ in $\C$.
\end{corollary}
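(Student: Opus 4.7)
The plan is to derive this Corollary as an immediate consequence of Proposition \ref{prop:14} combined with Lemma \ref{lem:10}. Proposition \ref{prop:14} already identifies $\Tr \sigma_m(\Frob_q)$ (up to different embeddings of $\Q(\zeta_N)$ in $\C$) with the character sum
\[
-\sum_{x\in\fq}{}^{*} \xi^m(x^i(1-x)^j(1-\lambda x)^k).
\]
So what remains is only to rewrite this sum in terms of Greene's finite-field ${}_2F_1$.

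First I would invoke the hypothesis $(m,N)=1$, which is built into the setting of $\sigma_m$ for $m\in(\Z/N\Z)^{\times}$. Under this hypothesis, Lemma \ref{lem:10} applies directly: the starred sum collapses to the ordinary character sum (since $\xi^m$ is a nontrivial character whose order does not divide any of $\gcd(N,i)$, $\gcd(N,j)$, $\gcd(N,k)$, $\gcd(N,i+j+k)$, so no additional singularity terms contribute), and that ordinary sum equals
\[
q\,\varepsilon(\lambda)\,\xi^{mj}(-1)\cdot \hgq{\xi^{-km}}{\xi^{im}}{\xi^{m(i+j)}}{\lambda}.
\]
Since $\lambda\not\equiv 0\pmod p$ by the standing assumptions in Proposition \ref{prop:14}, $\varepsilon(\lambda)=1$, and the factor disappears.

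Putting the two ingredients together, $\Tr\sigma_m(\Frob_q)$ agrees, up to the same ambiguity of embedding $\Q(\zeta_N)\hookrightarrow\C$ inherited from Proposition \ref{prop:14}, with
\[
-q\,\xi^{mj}(-1)\cdot \hgq{\xi^{-km}}{\xi^{im}}{\xi^{m(i+j)}}{\lambda},
\]
which is precisely the desired identity. No new obstruction arises: the only subtlety is bookkeeping of the embedding ambiguity, which is already present in Proposition \ref{prop:14} and is not altered by Lemma \ref{lem:10} since the latter is an identity between algebraic integers in $\Q(\zeta_N,\zeta_p)$ that descends to $\Q(\zeta_N)$ after taking the trace. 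Thus the proof is essentially a direct substitution of Lemma \ref{lem:10} into Proposition \ref{prop:14}.
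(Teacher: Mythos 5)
Your proposal is correct and matches the paper's intent exactly: the corollary is stated without proof precisely because it is the immediate substitution of Lemma \ref{lem:10} (with $\varepsilon(\lambda)=1$ since $\lambda\not\equiv 0 \pmod p$) into Proposition \ref{prop:14}. Nothing further is needed.
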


As $N\nmid i+j+k$, for a generic choice of $\l$ in a totally real field both $\sigma_m$ and $\sigma_{N-m}$ are strongly  irreducible.  Below, we summarize a result in \cite{ALLL} on 4-dimensional Galois representations with Quaternionic Multiplication. Previous discussions on this topic include \cite{ALL,HLV}.
\begin{prop}\label{prop:15} Let $\varphi(N)=2$, with previous assumptions and notation.
  If $End_0(J_\l^{new})$ contains a quaternion algebra, then the corresponding representations $\sigma_1$ and $\sigma_{N-1}$ of $G_{\Q( \zeta_N)}$, which are assumed to be absolutely irreducible, differ by a character.
\end{prop}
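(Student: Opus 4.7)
The plan is to exploit the quaternionic action on the $\ell$-adic Tate module to force $\rho^{new}$, after restriction to a suitable finite-index subgroup of $G_\Q$, to split as $\tau\oplus\tau$ for a single two-dimensional representation $\tau$. Matching this with the decomposition $\sigma_1\oplus\sigma_{N-1}$ inside $G_{\Q(\zeta_N)}$ will show $\sigma_1$ and $\sigma_{N-1}$ become isomorphic on restriction, and the twisting character will then be produced by Schur's lemma together with the same Clifford-theoretic argument used in the proof of Proposition \ref{prop:14}.

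Concretely, let $H\subset End_0(J_\l^{new})$ be the quaternion $\Q$-subalgebra in the hypothesis, and choose a finite Galois extension $L/\Q$ containing $\Q(\zeta_N)$ over which every element of $H$ is defined as an endomorphism of $J_\l^{new}$. Writing $V_\ell$ for the four-dimensional $\overline{\Q_\ell}$-space underlying $\rho^{new}$, the algebra $H\otimes_\Q \overline{\Q_\ell}$ acts faithfully on $V_\ell$ and commutes with the $G_L$-action. Because $\overline{\Q_\ell}$ splits every central simple $\Q$-algebra, $H\otimes_\Q \overline{\Q_\ell}\cong M_2(\overline{\Q_\ell})$; the unique faithful four-dimensional module over this matrix algebra is $W\oplus W$, where $W$ is the standard two-dimensional module. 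Any Galois action commuting with $M_2(\overline{\Q_\ell})$ must preserve this decomposition and act identically on both summands, so $\rho^{new}|_{G_L}\cong \tau\oplus\tau$ for some two-dimensional representation $\tau$ of $G_L$.

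Restricting the isotypic decomposition $\rho^{new}|_{G_{\Q(\zeta_N)}}=\sigma_1\oplus\sigma_{N-1}$ further down to $G_L$ yields $\sigma_1|_{G_L}\oplus\sigma_{N-1}|_{G_L}\cong \tau\oplus\tau$. The strong irreducibility of $\sigma_1$ (and of its Galois conjugate $\sigma_{N-1}$) that was already invoked in Section \ref{ss:Galois 3.5} guarantees that both restrictions remain absolutely irreducible of dimension two, so Jordan--H\"older uniqueness forces $\sigma_1|_{G_L}\cong \tau\cong \sigma_{N-1}|_{G_L}$. Schur's lemma then makes $\mathrm{Hom}_{G_L}(\sigma_1,\sigma_{N-1})$ one-dimensional over $\overline{\Q_\ell}$, and the natural conjugation action of the finite quotient $G_{\Q(\zeta_N)}/G_L$ on this line is via a character $\chi$; the Clifford-theoretic argument used in the proof of Proposition \ref{prop:14} then upgrades this to an isomorphism $\sigma_{N-1}\cong \sigma_1\otimes\chi$ of representations of $G_{\Q(\zeta_N)}$.

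The main technical point is Step two: one needs simultaneously that the quaternionic endomorphisms are rational over the chosen $L$ and that $L$ is small enough (equivalently, $\sigma_1|_{G_L}$ remains irreducible) to preserve the uniqueness of the two-dimensional Jordan--H\"older constituents on both sides of the equality $\sigma_1|_{G_L}\oplus\sigma_{N-1}|_{G_L}\cong \tau\oplus\tau$. Both requirements are taken care of by choosing $L$ to be precisely the (finite) field of definition of $H$ together with $\Q(\zeta_N)$, combined with the standing strong-irreducibility hypothesis; the remaining manipulations are essentially forced by dimension counting and Schur's lemma.
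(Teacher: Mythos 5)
Your argument is correct and follows essentially the same route as the paper's proof: restrict to a field $L$ over which the quaternionic endomorphisms $I,J$ are defined, deduce $\sigma_1|_{G_L}\cong\sigma_{N-1}|_{G_L}$, and then apply Clifford's theorem together with strong irreducibility to produce the twisting character. The only difference is that where the paper cites Section 3 of \cite{ALLL} for the middle step, you prove it directly from the module structure over $H\otimes_\Q\overline{\Q}_\ell\cong M_2(\overline{\Q}_\ell)$; note only that the commutant acts on the two-dimensional multiplicity space rather than literally ``preserving the decomposition $W\oplus W$ and acting identically on both summands,'' though the conclusion $\rho^{new}|_{G_L}\cong\tau\oplus\tau$ is unaffected.
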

\begin{proof}
Corresponding to $J_\l^{new}$ is a compatible family of 4-dimensional $\ell$-adic  representations $\pi$ of $G_{\Q}$ such that $\pi|_{G_{\Q(\zeta_N)}}=\sigma_1\oplus \sigma_{N-1}$.  If $End_0(J_\l^{new})$ contains a quaternion algebra, {from the discussion below in section \ref{ss:PS} we can see that} there are two endomorphisms $I,J$ of $J_\l^{new}$ such that $I^2$ and $J^2$ are  scalars and $IJ=-JI$.  Following the discussion in Section 3 of \cite{ALLL}, if $F$ is a Galois extension of $\Q$ over which both associated actions $I$ and $J$ on $W$ are defined, then $\sigma_1|_{G_F}$ and $\sigma_{N-1}|_{G_F}$ are isomorphic. As we assume both $\sigma_n$ and $\sigma_{N-n}$ are strongly irreducible, $\sigma_1$ and $\sigma_{N-1}$ differ by a character of $G_{\Q(\zeta_N)}$, following a result of Clifford \cite[Theorem 5]{Clifford}.
\end{proof}

\begin{prop}\label{prop:17}
  Assume that $N\nmid i+j+k$ and $N=4,6$.  If  for any $\l\in \Q$, $\sigma_1$ and $\sigma_{N-1}$ differ by a character, then
$B\(\frac{-k}N,\frac{(i+j+k)}N\)\big/B\(\frac{i}N,\frac{j}N\)\in \overline \Q$.
\end{prop}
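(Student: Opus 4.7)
The plan is to combine the explicit trace formula for $\sigma_m$ in terms of Gaussian hypergeometric functions (the Corollary after Proposition~\ref{prop:14}) with the hypergeometric transformation of Proposition~\ref{prop:9}, so that the hypothesis on $\sigma_1$ and $\sigma_{N-1}$ translates into the hypothesis of Lemma~\ref{prop:gauss-Beta}, from which the Beta ratio algebraicity follows.

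First, I would fix a prime $p \equiv 1 \pmod N$ that splits in $\Q(\zeta_N)$ and is unramified for $\rho$, together with a character $\xi \in \widehat{\F_p^\times}$ of order $N$. The Corollary after Proposition~\ref{prop:14} gives
\[
\Tr\sigma_1(\Frob_p) = -p\,\xi^j(-1)\,\hgq{\xi^{-k}}{\xi^i}{\xi^{i+j}}{\l}, \quad \Tr\sigma_{N-1}(\Frob_p) = -p\,\xi^{-j}(-1)\,\hgq{\xi^k}{\xi^{-i}}{\xi^{-(i+j)}}{\l},
\]
so their ratio equals $\xi^{-2j}(-1)$ times a ratio of hypergeometric values. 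Applying Proposition~\ref{prop:9} with $(A,B,C) = (\xi^{-k},\xi^i,\xi^{i+j})$, which gives $\overline A C = \xi^{i+j+k}$ and $\overline B C = \xi^j$, this ratio rewrites as
\[
\xi^{k-i-2j}(-1)\,\xi^{i+j}(-\l)\,\xi^{-j-k}(1-\l)\,\frac{J(\xi^{-k},\xi^{i+j+k})}{J(\xi^i,\xi^j)}.
\]
By the hypothesis of the proposition, this expression equals $\chi_\l(\Frob_p)$ for a character $\chi_\l$ of $G_{\Q(\zeta_N)}$, and is in particular a root of unity.

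Next, I would observe that the three explicit character-value factors are each of the form $\eta \mapsto \eta^a(x)$ for fixed integers $a$ and $x \in \{-1,-\l,1-\l\}$, hence depend multiplicatively on $\xi$. Combined with the fact that $\chi_\l$ is a global Galois character whose Frobenius values transform compatibly as $\xi$ varies (equivalently, as the prime of $\Q(\zeta_N)$ above $p$ varies), the Jacobi quotient $J(\xi^{-k},\xi^{i+j+k})/J(\xi^i,\xi^j)$ itself defines a homomorphism in $\xi$ on the group of order-$N$ characters in $\widehat{\F_p^\times}$. This is exactly the hypothesis of Lemma~\ref{prop:gauss-Beta} with $M = N$ and $(i',j',k') = (i,-k,i+j)$, so that the Lemma's quotient $J(\eta^{j'},\eta^{k'-j'})/J(\eta^{i'},\eta^{k'-i'})$ becomes $J(\eta^{-k},\eta^{i+j+k})/J(\eta^i,\eta^j)$. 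The non-divisibility conditions $N \nmid i,j,k,i+j,i+j+k$ follow from $1 \le i,j,k < N$ and the standing assumption $N \nmid i+j+k$, with the edge case $N \mid i+j$ handled separately. Applying the Lemma yields
\[
B\!\left(\tfrac{-k}{N},\tfrac{i+j+k}{N}\right) \big/ B\!\left(\tfrac{i}{N},\tfrac{j}{N}\right) \in \overline\Q.
\]

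The main obstacle is the homomorphism step: the hypothesis only directly produces that the Jacobi quotient is a root of unity for each individual $p$ and $\xi$, and one must upgrade this to the multiplicative dependence on $\xi$ required by Lemma~\ref{prop:gauss-Beta}. I would handle this using the intertwining/twisting argument from the proof of Proposition~\ref{prop:14}: replacing $\xi$ by $\xi^a$ for $(a,N) = 1$ corresponds to twisting $J_\l^{[N;i,j,k]}$ by an $N$th-root-of-unity factor, and the same intertwining operator identifies the twisted representations with the original one twisted by a power of $\chi_\l$. Tracking how $\chi_\l(\Frob_p)$ behaves under these twists, together with the manifest multiplicativity of the explicit character-value factors, will force the $\xi$-dependence of the Jacobi quotient to be multiplicative as well.
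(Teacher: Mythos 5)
Your proposal follows the paper's own proof exactly: it combines the trace formula from the Corollary to Proposition \ref{prop:14} with the transformation in Proposition \ref{prop:9} to conclude that $J(\eta^{-k},\eta^{i+j+k})/J(\eta^{i},\eta^{j})$ must be a character, and then invokes Lemma \ref{prop:gauss-Beta}. The only difference is that you make explicit (and propose the twisting argument to repair) the step of upgrading ``root of unity for each fixed $\xi$'' to the homomorphism hypothesis of Lemma \ref{prop:gauss-Beta}, a point the paper passes over without comment.
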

\begin{proof}
Let $p>5$ be a prime that is congruent to 1 modulo $N$. Then by the previous two results and Proposition \ref{prop:9}, for any $\eta\in \widehat {\fp ^\times}$ of order $N$, $J(\eta^{-k},\eta^{(i+j+k)})/J(\eta^{i},\eta^{j})$ has to be a character. Our claim follows from Proposition \ref{prop:gauss-Beta}.
\end{proof}

\section{Proof of the Theorem \ref{thm:1} and remarks}
\subsection{Proof of the Theorem \ref{thm:1}}
As we have seen in \S \ref{abelian varieties}, when $\varphi(N)=2$, $J_\l^{new}$ is a 2-dimensional abelian variety over $\Q(\l)$.  We now consider when $End_0(J_\l^{new})$ contains a quaternion algebra for each $\l\in \overline \Q$.

As $C_\lambda^{[N;i,j,k]}$ is an $N$-fold cover of $\mathbb{C} P^1$ generically with $4$ singularities $0,1, \frac 1 \l, \infty$, one will pick up two independent Pochhammer contours $\gamma_{01}$ and $\gamma_{\frac1{\lambda}\infty}$. The corresponding periods are algebraic multiples of
$\displaystyle
\int_{\gamma_{01}}\omega$ and $\displaystyle  \int_{\gamma_{\frac1{\lambda}\infty}}\omega$, where $\omega$ is a differential of the first or second kind on $X(\l):=X_{\l}^{[N;i,j,k]}$ \cite{Yoshida-love}.

{Recall our discussion of holomorphic differentials on $X(\l)$ in \S \ref{periods}. Let ${x^{b-1}(1-x)^{c-b-1}(1-\lambda x)^{-a}dx}$ be any differential $\omega$ on $X(\l)$. We are going to integrate it along two different cycles, denoted by $\gamma_{01}$ and $\gamma_{\frac 1\l \infty}$, corresponding to the two following integrals respectively}

\begin{align}\label{eq:int1}
  \int_{0}^1\omega=B(b,c-b) \pFq{2}{1}{a & b}{&c}{\l},
\end{align}
\begin{multline}\label{eq:int2}
  \int_{\frac 1\lambda}^\infty\omega=(-1)^{{c-a-b-1}}\lambda^{1-c}\int_0^1x^{a-c}(1-x)^{-a}(1-\lambda x)^{c-b-1}dx\\
             =(-1)^{{c-a-b-1}}\lambda^{1-c}B(1+a-c,1-a)\pFq{2}{1}{1+b-c & 1+a-c}{& 2-c}{\l},
\end{multline}
which are two independent solutions of the $HDE(a,b;c;\l)$ when $c$ is not an integer.

When $N=3,4,6$, $(\Z/N\Z)^\times=\{1, N-1\}$.
In this case, $$\omega_1=\frac{dx}{y}= \frac{dx}{\sqrt[N]{x^i(1-x)^j(1-\l x)^k}} \in L_1$$  $$\omega_{N-1}:=\frac{x^{i-1}(1-x)^{j-1}(1-\l x)^{k-1}dx}{y^{N-1}}=\frac{dx}{\sqrt[N]{x^{N-i}(1-x)^{N-j}(1-\l x)^{N-k}}}\in L_{N-1}$$ are differentials of $J_\l^{new}$ of the second kind.

Thus letting $a=\frac{k}N, b=\frac{N-i}N, c=\frac{2N-i-j}N$ in \eqref{eq:int1} and \eqref{eq:int2} we get
\begin{align*}
 & \tau_1=\int_{0}^1\omega_1=B\(\frac{N-i}N,\frac{N-j}N\)\hg {\frac{k}N}{\frac{N-i}N} {\frac{2N-i-j}N}\lambda,\\
 & \tau_1'=\int_{\frac 1\lambda}^\infty\omega_1=(-1)^{{-\frac{k+j}N}}\lambda^{\frac{i+j-N}N}B\(\frac{i+j+k-N}N,\frac{N-k}{N}\)\hg {\frac{j}N}{\frac{i+j+k-N}N}{\frac{i+j}N}\lambda, \\
& \tau_{N-1}=\int_{0}^1\omega_{N-1}=B\(\frac iN,\frac jN\)\hg {\frac{N-k}N}{\frac iN}{\frac{i+j}N}\lambda,\\
 & \tau_{N-1}'=\int_{\frac 1\lambda}^\infty\omega_{N-1}\\
& \quad \quad =  (-1)^{{\frac{k+j}N}}\lambda^{\frac{N-i-j}N}B\(\frac{2N-i-j-k}N,\frac kN\)\hg {\frac{N-j}N}{\frac{2N-i-j-k}N}{\frac{2N-i-j}N}\lambda.
\end{align*}
By Euler's transformation formula
{
$$
  \pFq{2}{1}{a,b}{,c}{\lambda}=(1-\l)^{c-a-b}\hg{c-a}{c-b}{c}\l
$$
}
(see \cite[Theorem 2.2.5]{AAR}), $\tau_1'$ is related to $\tau_{N-1}$:
$$
  \tau_1' =(-1)^{-\frac{k+j}N}\lambda^{\frac{i+j-N}N}(1-\l)^{\frac{N-j-k}N}B\(\frac{i+j+k-N}N,\frac{N-k}{N}\)\hg {\frac{N-k}N}{\frac iN}{\frac{i+j}N}\lambda.
$$
Similarly $\tau_{N-1}'$ is related to $\tau_1$ as
$$
   \tau_{N-1}'=(-1)^{{\frac{k+j}N}}\lambda^{\frac{N-i-j}N}(1-\l)^{\frac{k+j-N}N}B\(\frac{2N-i-j-k}N,\frac kN\)\hg {\frac{k}N}{\frac{N-i}N}{\frac{2N-i-j}N}\lambda.
$$

\noindent Thus we have
\begin{align*}
 & \tau_{N-1}'/\tau_1=\alpha(\lambda)\frac{\Gamma\(2-\frac{i+j+k}N\)\Gamma\(\frac kN\)}{\Gamma\(1-\frac{i}N\)\Gamma\(1-\frac{j}N\)}, \,
  \tau_1'/\tau_{N-1}=\frac{\Gamma\(\frac{i+j+k}N-1\)\Gamma\(1-\frac kN\)}{\alpha(\lambda)\Gamma\(\frac{i}N\)\Gamma\(\frac{j}N\)},
\end{align*}
where $\alpha(\lambda)=(-1)^{\frac{k+j}N}\lambda^{\frac{N-i-j}N}(1-\lambda)^{\frac{k+j-N}N}$.
Therefore,
\begin{equation}\label{eq:18}
  \gamma= {\frac{\tau_1'\tau_{N-1}'}{\tau_1\tau_{N-1}}}=\frac{{\(\sin \frac{i}N\pi\)\(\sin \frac{j}N\pi\)}}{\(\sin \frac kN\pi\)\(\sin\frac{2N-i-j-k}N\pi\)}\, \in \Q(\zeta_N+\zeta_N^{-1}).
\end{equation}

\begin{theorem}(W\"{u}stholz \cite{Wu})\label{wu}
  Let $A$ be an abelian variety isogenous over $\overline\Q$ to the direct product $A_1^{n_1}\times\cdots \times A_k^{n_k}$ of simple, pairwise non-isogenous abelian varieties $A_\mu$ defined over $\overline\Q$, $\mu=1,\ldots,k$. Let $\Lambda_{\overline\Q}(A)$ denote the space of all periods of differentials, defined over $\overline\Q$, of the first kind and the second on $A$. Then the vector space $\widehat{V}_A$ over $\overline{\Q}$ generated by $1$, $2\pi i$, and $\Lambda_{\overline\Q}(A)$, has dimension
  $$
    \dim_{\overline\Q}\widehat{V}_A=2+4\sum_{\nu=1}^k\frac{\dim A_\nu^2}{\dim_{\Q}(\mbox{End}_0A_\nu)}.
  $$
 \end{theorem}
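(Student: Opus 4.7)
The plan is to deduce the formula from W\"ustholz's Analytic Subgroup Theorem applied to the universal vectorial extension $A^\natural$ of $A$. The Analytic Subgroup Theorem asserts that if $G$ is a connected commutative algebraic group over $\overline{\Q}$ and $V\subset\mathrm{Lie}(G)$ is a $\overline{\Q}$-subspace whose complexification $V_{\C}$ contains a nonzero element of $\ker(\exp_G)$, then $V_{\C}$ contains the Lie algebra of a nontrivial algebraic subgroup of $G$ defined over $\overline{\Q}$. Integrals of differentials of the second kind on $A$ appear precisely as coordinates of lattice points in $\mathrm{Lie}(A^\natural)$ via the comparison of algebraic de Rham cohomology with Betti homology; this is what makes the transcendence machinery applicable to second-kind periods and not merely to holomorphic ones.

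First I would reduce to the case of a single simple factor. Since an isogeny induces a $\overline{\Q}$-isomorphism on algebraic de Rham cohomology and a $\Q$-isomorphism on rational homology, the space $\Lambda_{\overline{\Q}}(A)$ is isogeny invariant; moreover $\Lambda_{\overline{\Q}}(A\times A')=\Lambda_{\overline{\Q}}(A)+\Lambda_{\overline{\Q}}(A')$ and $\Lambda_{\overline{\Q}}(A_\nu^{n_\nu})=\Lambda_{\overline{\Q}}(A_\nu)$ as subspaces of $\C$, so multiplicities drop out. This reduces the problem to proving, for each simple $A_\nu$ of dimension $g_\nu$ with $D_\nu:=\mathrm{End}_0(A_\nu)$ of $\Q$-dimension $d_\nu$, the single-factor identity $\dim_{\overline{\Q}}\widehat{V}_{A_\nu}=2+4g_\nu^2/d_\nu$, after which one verifies that non-isogenous simple factors contribute $\overline{\Q}$-linearly independent periods modulo $\overline{\Q}\cdot 1+\overline{\Q}\cdot 2\pi i$ (itself another instance of the Analytic Subgroup Theorem, applied to the product group).

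Next I would establish both bounds for a simple abelian variety $B$ over $\overline{\Q}$ with $D:=\mathrm{End}_0(B)$ of $\Q$-dimension $d$. The upper bound is a linear-algebra computation: the commuting actions of $D$ on Betti homology and on de Rham cohomology (on the latter via its extension to $D\otimes_\Q\overline{\Q}$) constrain the entries of the $2g\times 2g$ period matrix so that they generate a $\overline{\Q}$-subspace of dimension at most $4g^2/d$, while the constant $1$ and the period $2\pi i$ of $\mathbb{G}_m$ furnish the additional $+2$. The lower bound is the deep step: given a hypothetical $\overline{\Q}$-linear relation $\sum c_{s,t}\,\omega_s(\gamma_t)=\alpha+\beta\cdot 2\pi i$ not accounted for by the $D$-action and the classical dimensions, encode it as a $\overline{\Q}$-linear form on $\mathrm{Lie}\bigl((B^\natural)^m\times\mathbb{G}_m\bigr)$ vanishing on the lattice point assembled from the periods. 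Apply the Analytic Subgroup Theorem to extract a proper algebraic subgroup defined over $\overline{\Q}$ whose Lie algebra lies in the kernel of this form; simplicity of $B$ then forces the subgroup to be the graph of an element of $D$, contradicting the assumption that the relation was not of the prescribed type.

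The main obstacle is the transcendence input itself: the Analytic Subgroup Theorem rests on a substantial extension of Baker's method, requiring zero-estimates on commutative algebraic groups and a careful construction of auxiliary functions on $\mathrm{Lie}(G)$ together with an interpolation--extrapolation--elimination scheme. A secondary, but essential, technical step is the correct setup of the universal vectorial extension $B^\natural$ and the de Rham-to-Betti comparison on it, so that periods of the second kind are genuinely realized as coordinates of $\overline{\Q}$-rational lattice points accessible to the analytic subgroup machinery; without this setup, the theorem would yield only the analogous statement for periods of the first kind.
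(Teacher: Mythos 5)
The paper does not prove this statement: it is imported verbatim from W\"ustholz \cite{Wu}, in the formulation given by Cohen in the appendix to \cite{Shiga-Tsutsui-Wolfart}, and is used as a black box in the proof of Theorem \ref{thm:1}. So there is no internal proof to compare against; the relevant comparison is with the literature proof, and your sketch reconstructs that proof faithfully: the analytic subgroup theorem applied to a group built from the universal vectorial extensions $A_\nu^\natural$ (which is precisely what makes periods of the second kind appear as coordinates of period-lattice points), together with a $\mathbb{G}_m$ factor producing $2\pi i$; isogeny invariance and additivity of $\Lambda_{\overline\Q}$ to eliminate the multiplicities $n_\nu$; the upper bound $4g_\nu^2/d_\nu$ obtained by pairing $2g_\nu/d_\nu$ generators of $H_1(A_\nu,\Q)$ as a module over the division algebra $\mathrm{End}_0 A_\nu$ against a $2g_\nu$-dimensional basis of algebraic de Rham cohomology; and the lower bound by converting any unexpected $\overline\Q$-linear relation into an algebraic subgroup and ruling it out by simplicity. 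Two imprecisions in your write-up are worth flagging, though neither is fatal. First, to encode an inhomogeneous relation $\sum c_{s,t}\,\omega_s(\gamma_t)=\alpha+\beta\cdot 2\pi i$ as a linear form vanishing on a lattice point, the ambient group must contain a $\mathbb{G}_a$ factor: your group $(B^\natural)^m\times\mathbb{G}_m$ accounts for $2\pi i$ but not for the constant term $\alpha$, i.e.\ for the ``$1$'' in $\widehat{V}_A$. Second, ``the subgroup is the graph of an element of $D$'' oversimplifies the structure theory: connected algebraic subgroups of $(B^\natural)^m\times\mathbb{G}_m\times\mathbb{G}_a$ also carry vectorial and toric parts, and their abelian quotients are cut out by matrices over $D$ rather than single endomorphisms, so the actual argument must verify that every such subgroup imposes only relations already generated by the $D$-action together with $1$ and $2\pi i$. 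These are exactly the points handled in W\"ustholz's original argument, so your outline isolates the genuinely hard content (the analytic subgroup theorem itself) in the right place and is, modulo those two repairs, the correct proof strategy for the quoted theorem.
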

\noindent This version of W\"{u}stholz's result is given by Cohen in \cite[Appendix]{Shiga-Tsutsui-Wolfart}.

Assume that $\l\in \overline \Q$  and  $J_\l^{new}$  does not admit CM. In this case, Theorem \ref{wu} implies that  for the 2-dimensional abelian variety $J_\l^{new}$,  its periods of the first and second kinds plus 1 and $2 \pi i$ span the vector space of dimension 10 (or 6) over $\overline \Q$.  When $\frac{\Gamma\(2-\frac{i+j+k}N\)\Gamma\(\frac kN\)}{\Gamma\(1-\frac{i}N\)\Gamma\(1-\frac{j}N\)}$ is {an algebraic number}, the dimension is less than 9 as $\tau_1$ and $\tau_{N_1}'$ (resp. $\tau_1'$ and $\tau_{N-1}$) are algebraic multiples of each other. Assuming non-CM, the dimension has to be 6. This happens if and only if $J_\l^{new}$ is either a direct sum of two elliptic curves, in which case $End_0(J_\l^{new})$ is a matrix algebra, or $J_\l^{new}$ is a simple abelian variety whose endomorphism algebra is a division algebra. In either case, the endomorphism algebra is a quaternion algebra.

\subsection{A few remarks}
Independent of Theorem \ref{wu}, one can conclude for the cases when $N=4,6$, that if $End_0(J_{\l}^{new})$ contains a quaternion algebra for a generic $\l\in \Q$, then the Beta quotient is algebraic. By the previous discussion,  $\sigma_1$  and $\sigma_{N-1}$, whose traces can be computed by Gaussian hypergeometric functions, always differ by a character,  which yields an algebraic Beta quotient by Yamamoto's result. The useful fact for us is that for general $N$ we can compute $\dim_{\overline\Q}\widehat{V}_{J_\l^{new}}$ for a generic $\l \in \overline \Q$ using Galois representations.

So far, we cannot tell the decomposition of $J_\l^{new}$  using Galois representations. However, the period matrix can be used to determine the endomorphism algebra and the decomposition of $J_\l^{new}$, which will be discussed next.

\section{Periods and endomorphisms}\label{ss:PS}
We now assume that $1\leq i,j,k< N$, $\gcd(N,i,j,k)=1$, $N\nmid i+j$, nor $i+j+k$ and $\l\neq 0, 1$. Let $S$ be a basis of $H^0(X(\l),\Omega^1)^{new}$ collecting differentials of the form $\omega=x^{b_0}(1-x)^{b_1}(1-\l x)^{b_2}dx/y^n$ as in \eqref{eq:1} with $(n,N)=1$ .  In  \cite[Satz 1]{Wolfart}, Wolfart shows that  the Jacobian subvariety $J_\l^{new}$ is isogenious to $\C^{\varphi(N)}/\Lambda(\lambda)$, where
$$
  \Lambda(\l)=\left\{ \(\sigma_n(u)\int_{\gamma_{01}}\omega+\sigma_n(v)\int_{\gamma_{\frac1\l\infty}}\omega\)_{\omega \in S}: u,v \in \Z[\zeta_N]\right\},
$$
and {$\sigma_n$ is the element  of $\mbox{Gal}(\Q(\zeta_N)/\Q)$ such that $\sigma_n(\zeta_N)=\zeta_N^n$}. In particular,
when $N < i+j+k <2N$, the holomorphic differential $1$-forms are given by $\omega_n=dx/y^n$ with $(n,N)=1$.
This means the lattice $ \Lambda(\l)$ can be identified with the $\Z$-module generated by the $2\varphi(N)$ columns
$$
  \(\sigma_n(\zeta_N^i)\int_{\gamma_{01}}\omega_n\)_i, \quad  \(\sigma_n(\zeta_N^i) \int_{\gamma_{\frac1{\lambda}\infty}}\omega_n\)_i,\quad (n,N)=1,
$$
for $i=0, \ldots, \varphi(N)-1$.  For $n=1$, $N-1$, the periods can be computed as before.
A similar computation applies to each $n$ with $(N,n)=1$.  Furthermore, if $i+j\neq N$,  a period matrix of the complex tori $\C^{\varphi(N)}/\Lambda(\lambda)$
can be written as the
$\varphi(N)$-by-$2\varphi(N)$ matrix,
$$
  \(\(\sigma_n(\zeta_N^i)\int_{\gamma_{01}}\omega_n,\)\mid \(\sigma_n(\zeta_N^i) \int_{\gamma_{\frac1{\lambda}\infty}}\omega_n\)\),
$$
where the indices $n$, and $i$ run over all $(N,n)=1$, and $0,\cdots,\varphi(N)-1$, respectively.

\begin{example} For the curve $X_\l^{[6;4,3,1]}$, the corresponding period matrix is given by
$$
\(
\begin{array}{cc|cc}
  \tau_1 &\zeta\tau_1 &\beta_1\tau_3  &\zeta \beta_1\tau_3\\
  \tau_3 &\zeta^{-1}\tau_3 &\beta_2\tau_1  &\zeta^{-1} \beta_2\tau_1
\end{array}
\)
$$
where
\begin{align*}
  \tau_1=  B\(1/3, 1/2\)\hg {\frac16}{\frac13}{\frac56}{\lambda}, & \quad
  \tau_3=  B\( 2/3, 1/2\)\hg{\frac 56}{\frac 23}{\frac76}{\lambda},\\
  \beta_1=  (-1)^{-2/3}\(\lambda^{1/6}(1-\lambda)^{1/3} \sqrt[3]{2}\),& \quad
  \beta_2=  (-1)^{2/3}\(\lambda^{-1/6}(1-\lambda)^{-1/3} \sqrt[3]{4}\),
\end{align*}
and $\beta_1\beta_2=2$.
From this, we can see that the endomorphisms
$$
  \begin{pmatrix}\zeta&0\\0&\zeta^{-1}\end{pmatrix},\quad  \begin{pmatrix}0&\beta_1\\\beta_2&0\end{pmatrix}
$$
are contained in the $End_0(J_\l^{new})$. For a generic choice of $\l\in \Q$ (non-CM case), these two generate $End_0(J_\l^{new})$ which is isomorphic to  $\left ( \frac{-3,2}{\Q}\right)$.

In view of the Gaussian hypergeometric functions, we have for any prime $p\equiv 1 \pmod 6$ and $\eta\in \widehat {\fp^{\times}}$ of order 6
\begin{equation}
  {\hgp{\eta}{\eta^2}{\overline{\eta}}{\lambda}}=\eta(\lambda)\eta^2\(\frac{1-\lambda}4\)\, {\hgp{\overline{\eta}}{\overline{\eta}^2}{{\eta}}{\lambda}},
\end{equation}
where $\eta$ is a character in $\widehat{\F_p^\times}$ of order $6$ with $\eta(0)=0$.  This equality follows by Proposition \ref{prop:9} and \cite[Chapter 3]{Berndt-Evans-Williams}.
\end{example}

In general, when $N=3, 4, 6$, a period matrix of $\C^2/\Lambda$  {corresponding to $J_\l^{new}$} is given by
$$
\(
\begin{array}{cc|cc}
  \tau_1 &\zeta_N\tau_1 &\alpha(\l)\beta\tau_2 &\zeta_N \alpha(\l)\beta\tau_2\\
  \tau_2 &\zeta_N^{-1}\tau_2 &\gamma\tau_1/\beta\alpha(\l) &\zeta_N^{-1} \gamma\tau_1/\beta\alpha(\l)
\end{array}
\),
$$
where
$$
  \tau_1= \int_{\gamma_{01}}\omega_1, \quad \tau_2= \int_{\gamma_{01}}\omega_{N-1},
$$
$$
  \beta=B\(\frac{i+j+k-N}N,\frac{N-k}{N}\)/B\(\frac iN,\frac jN\)
$$
and
$\alpha(\l)$, $\gamma$ are the same notations in equation (\ref{eq:18}).
Thus,  if $\beta\in\overline\Q$ then $\mbox{End}(J_\l^{new})$ contains the endomorphisms
$$
  E=\begin{pmatrix}\zeta_N&0\\0&\zeta_N^{-1}\end{pmatrix},\quad  J=\begin{pmatrix}0&\alpha(\l)\beta\\ \frac{\gamma}{\alpha(\l)\beta}&0\end{pmatrix}.
$$
Furthermore, it contains the quaternion algebra defined over $\Q$ generated by $I=2E-(\zeta_N+\zeta_N^{-1})$  and $J$
with
$$
  I^2= \(\zeta_N-\zeta_N^{-1}\)^2,  \quad J^2=\gamma \quad   \in {\Q(\zeta_N+\zeta_N^{-1})},
$$ the field $\Q(\zeta_N+\zeta_N^{-1})$ is $\Q$ when $N=3,4,6$.

\section{2-dimensional abelian varieties related to (3,6,6)}
Now we compare our result with a discussion in \cite{PS} for (3,6,6).  {We note our notation differs from \cite{PS} by changing $s^2$ to $s$.} Consider the family of Picard curves
$$
  C'(s):\, w^3=z(z-1)\((z-1/2)^2-s/4\),
$$
which are isomorphic to
$$
   C(s):\, w^3=(z^2-1/4)\(z^2-s/4\).
$$
The Jacobians of these genus 3 Picard curves decompose into a direct sum of a CM elliptic curve $E'(s)$ and a $2$-dimensional abelian variety $A'(s)$ such that for each ${s}\in \overline \Q\setminus \{0,1\}$, $End_0(A'(s))$ contains $\left ( \frac{-3,2}{\Q}\right)$ (see \cite{PS}).  In other words, one can take $A'(s)$, up to isogeny, to be the family of genus 2 curves with QM by $\left (\frac{-3,2}{\Q}\right )$ above the Shimura curve for $(3,6,6)$.

In \cite{PS}, it is shown that the holomorphic differential $\frac{dz}{w}$ on $C'(s)$  satisfies an order-2 Picard-Fuchs equation, with one solution
$
\pFq{2}{1}{\frac 16, \frac 13}{, \frac 56}{s}.
$
\footnote{In \cite{PS} the function is listed as $\pFq{2}{1}{\frac 16, \frac 13}{\quad \frac 23}{s}$, which was a minor error.}

Suppose
$$
   \mbox{Jac}(C(s))=E^\prime(s)\oplus A^\prime(s)
$$
and
$$
  \mbox{Jac}(X_s^{[6;4,3,1]})=E(s)\oplus A(s),
$$
where for $\mu=e^{2\pi i/3}$,
$$
  E^\prime(s):\, w^3=(z-1/4)\(z-s/4\) \simeq \C\diagup (\Z\mu+\Z)
$$
and
$$
  E(s):\, y^3=x^4(1-x)^3(1-sx)
$$
are both CM elliptic curves, but not isomorphic over $\Q(s)$ in general. We then claim that $A(s)$ is $\Q$-isogenous to $A^\prime(s)$ if $s\in \Q\setminus \{ 0, 1\}$.
\begin{theorem}
Let $s\in \Q$, $\ell$ be prime, and $\rho$, $\rho'$ the 4-dimensional  $\ell$-adic Galois representations of $G_\Q$ arising from  $A(s)$ and $A'(s)$, respectively. If both $\rho$ and $\rho'$ are absolutely irreducible, then they are isomorphic.
\end{theorem}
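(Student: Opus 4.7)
The plan is to apply the Chebotarev density theorem together with the Brauer-Nesbitt theorem: two continuous semisimple $\ell$-adic representations of $G_{\Q}$ are isomorphic if and only if they have equal traces of Frobenius on a set of primes of natural density $1$. Since absolute irreducibility implies semisimplicity, it suffices to verify $\Tr\rho(\Frob_p) = \Tr\rho'(\Frob_p)$ at all but finitely many primes of good reduction.

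First I would handle the primes $p\equiv 1\pmod 6$, i.e.\ those that split completely in $\Q(\zeta_6)$. For such $p$, Theorem \ref{main count} and Lemma \ref{lem:10} express $\Tr\rho(\Frob_p)$ as a Gaussian hypergeometric sum attached to $X_s^{[6;4,3,1]}$ of the shape $-p \sum_{m\in\{1,5\}} \xi^{mj}(-1)\,\hgp{\xi^{-km}}{\xi^{im}}{\xi^{m(i+j)}}{s}$, once the explicit CM contribution of the elliptic factor $E(s)$ (a Hecke-character value) is subtracted. A parallel point-count on the Picard curve $C(s)$, carried out via Lemma \ref{count_lemma} and Definition \ref{hg}, gives an analogous Gaussian hypergeometric expression for $\Tr\rho'(\Frob_p)$ after removing the CM piece $E'(s)$. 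Both surviving pieces correspond, via the $_2F_1$--to--period identification in Section \ref{periods}, to the same Picard-Fuchs solution $\pFq{2}{1}{\frac16, \frac13}{, \frac56}{s}$, and the two finite-field expressions can then be identified using the transformation formulas of Proposition \ref{prop:9} and Lemma \ref{2f1_jacobi1}, which are the mod-$p$ shadows of the classical Euler/Kummer transformations relating the two families.

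For $p\equiv 5\pmod 6$, i.e.\ those inert in $\Q(\zeta_6)$, I would appeal to the induction description from Section \ref{ss:Galois 3.5}: the primitive part $\rho$ is $\Ind_{G_{\Q(\zeta_6)}}^{G_\Q}\sigma_1$, and the same description applies to $\rho'$, since $A'(s)$ parametrizes the same $(3,6,6)$ Shimura family and therefore carries the analogous Galois-equivariant decomposition over $\Q(\zeta_6)$. A Frobenius at an inert prime lies outside $G_{\Q(\zeta_6)}$, so the character-theoretic formula for induced representations gives $\Tr\rho(\Frob_p)=\Tr\rho'(\Frob_p)=0$. (Absolute irreducibility of $\rho$ and $\rho'$ forces $\sigma_1$ to be non-isomorphic to its Galois conjugate, so the induction from the index-$2$ subgroup is irreducible and this vanishing is the only possibility.) Thus the two traces agree on the other half of the primes for free, and combining with the previous step we obtain agreement on a density-$1$ set.

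The main obstacle will be the explicit matching of the two hypergeometric expressions in the second paragraph. The Picard curve $C(s)$ is a cyclic triple cover of $\mathbb{P}^1$, while $X_s^{[6;4,3,1]}$ is a cyclic sextic cover, so the two point-counts naturally involve characters of different orders. Lifting the cubic-character hypergeometric sum arising from $C(s)$ to a sextic-character sum via the Hasse-Davenport relation \eqref{H-D} (in the spirit of Example \ref{ex:[10,1,6,5]}) and then applying Proposition \ref{prop:9} to bring both sums into a common normal form is the bookkeeping-heavy step, requiring careful tracking of sign conventions and of the Hecke characters attached to $E(s)$ and $E'(s)$. Once the traces are shown to coincide on a density-$1$ set of primes, Chebotarev combined with Brauer-Nesbitt and the absolute irreducibility hypothesis gives $\rho \cong \rho'$.
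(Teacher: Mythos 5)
Your overall framework---reduce to comparing traces of Frobenius on a density-one set of primes via Chebotarev and Brauer--Nesbitt, and dispose of the inert primes $p\equiv 2\pmod 3$ by showing both traces vanish---is the same as the paper's, and the inert case is fine (the paper gets the vanishing from the bijectivity of $y\mapsto y^3$ on $\F_p$, you get it from the induced structure; either works). The gap is in the split-prime case, which is where all the content lies. After removing the CM factors, the identity you must prove is
\[
 \sum_{x\in \F_p}\bigl[\eta^2(f(x^2))-\eta^2(f(x))\bigr]=\sum_{x\in \F_p}\eta(g(x)),
\]
with $f(x)=(x-1/4)(x-s/4)$ and $g(x)=x^4(1-x)^3(1-sx)$. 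Because the Picard curve $C(s)$ sits over the $z$-line via $z=x^2$, this is a \emph{quadratic} transformation of finite-field hypergeometric sums. The tools you propose to use will not produce it: Proposition \ref{prop:9} and Lemma \ref{2f1_jacobi1} are linear transformations and parameter symmetries in the variable $\lambda$, and the Hasse--Davenport relation \eqref{H-D} only rewrites individual Gauss sums, not character sums twisted by a degree-two substitution in the base. Deferring this to ``bookkeeping-heavy'' matching leaves the essential step unproved, and there is no evident way to complete it with the cited formulas.

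The paper's route around this is an idea your proposal is missing: it does not prove the character-sum identity exactly, but only modulo $p$. By the Weil bound each side is a rational integer of absolute value $O(\sqrt p)$, so for all but finitely many $p$ a congruence mod $p$ already forces equality. Modulo $p$ the character sums become $\sum_x f(x^2)^{(p-1)/3}$, $\sum_x f(x)^{(p-1)/3}$ and $\sum_x g(x)^{(p-1)/6}$, and since $\sum_{x\in\F_p}x^t$ vanishes unless $(p-1)\mid t$ (and $t>0$), the whole identity collapses to matching the coefficient of $x^{p-1}$ in two explicit polynomial powers. That is Lemma \ref{p-1coeffs}, an elementary binomial-coefficient computation. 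If you want to keep your exact-identity route you would need to establish a genuine finite-field quadratic transformation formula, which is a substantially harder task than the congruence argument; I recommend adopting the Weil-bound reduction instead.
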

\begin{proof}
If both $\rho$ and $\rho'$ are absolutely irreducible, it suffices to check that they have the same trace at  Frobenius $\text{Frob}_p$  for almost all unramified primes $p>2$.  When $p\equiv 2 \pmod 3$, we see that the traces of $\text{Frob}_p$ are both zero by the way the algebraic curves $ y^3=(x^2-1/4)\(x^2-s/4\)$, and $y^6=x^4(1-x)^3(1-sx)$ are presented, and using the fact that $y\mapsto y^3$ is a bijection on $\mathbb F_p$.

Now we assume $p\equiv 1\pmod 3$.  From the same point-counting technique that we used in \S \ref{counting}, it is sufficient to show that for any  $\eta\in \widehat{\mathbb F_p^\times}$ of order 6,
\begin{multline}\label{eqn:366target}
 \sum_{x\in \fp}\left[\eta^2(f(x^2))+\eta^4(f(x^2))-\eta^2(f(x)) -\eta^4(f(x)) \right]\\
= \sum_{x\in \fp}  \left[\eta(g(x))+ \eta^5(g(x)) \right],
\end{multline}
where $f(x)=(x-1/4)\(x-s/4\)$ and $g(x)=x^4(1-x)^3(1-sx)$.

Let $H=Gal(\overline{\Q}/\Q(\sqrt{-3}))$. Both $\rho$ and $\rho'$ are induced from 2-dimensional Galois representations of $H$, i.e. $\rho= \text{Ind}_H^G \sigma$ and $\rho'= \text{Ind}_H^G \sigma',$ for some 2-dimensional Galois representations $\sigma$ and $\sigma'$ of $H$. It suffices to check that upon choosing the right $\sigma$ and $\sigma'$ among conjugates, $\Tr \sigma(\text{Frob}_p)=\Tr \sigma'(\text{Frob}_p)$. In our setting, the claim to be established is

\begin{equation}
 \sum_{x\in \F_p} \left[\eta^2(f(x^2))-\eta^2(f(x))\right]
= \sum_{x\in \F_p}  \eta(g(x)).
\end{equation}

Both  $\Tr \rho(\text{Frob}_p)$ and  $\Tr \rho'(\text{Frob}_p)$ are rational integers with absolute values less than ${4 \cdot \sqrt{p}}$ using the Weil bound. When $p>16$, it suffices to check that
\begin{equation*}
\Tr \rho(\text{Frob}_p)\equiv \Tr \rho'(\text{Frob}_p) \pmod p.
\end{equation*}

From this argument, \eqref{eqn:366target} will be a consequence of
\begin{equation}\label{simpler}
 \sum_{x\in \F_p} \eta^2(f(x^2))-\eta^2(f(x))
\equiv \sum_{x\in \F_p}  \eta(g(x)) \pmod p.
\end{equation}

To verify \eqref{simpler} modulo $p$, it suffices to observe that the constants of $((x-1/4)(x-s/4))^{(p-1)/3}$ and $((x^2-1/4)(x^2-s/4))^{(p-1)/3}$ agree, and then prove that the $(p-1)$st coefficients of  $((x^2-1/4)(x^2-s/4))^{(p-1)/3}$ and $(x^4(1-x)^3(1-sx))^{(p-1)/6}$ agree modulo $p$, which is proved in Lemma \ref{p-1coeffs} below.
\end{proof}

\begin{lemma}\label{p-1coeffs}
{Let $p\equiv 1 \pmod{3}$ be prime.}  The $(p-1)$st coefficients of  $((x^2-1/4)(x^2-s/4))^{(p-1)/3}$ and $(x^4(1-x)^3(1-sx))^{(p-1)/6}$ agree modulo $p$.
\end{lemma}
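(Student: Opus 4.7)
Set $m=(p-1)/6$, so that $2m=(p-1)/3$ and $6m=p-1$. My plan is to compute both coefficients as explicit polynomials in $s$ of degree at most $m$, reduce the desired congruence to a single binomial identity via a ratio computation, and finish with a classical Gauss-style congruence. The main bookkeeping obstacle is keeping track of signs and powers of $4$ in the first expansion; once that is in place, the remaining steps are mechanical and conceptual.

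First I would expand
$$
\left((x^2-\tfrac{1}{4})(x^2-\tfrac{s}{4})\right)^{2m} = \sum_{i,j=0}^{2m} \binom{2m}{i}\binom{2m}{j}(-\tfrac{1}{4})^{2m-i}(-\tfrac{s}{4})^{2m-j} x^{2(i+j)},
$$
and extract the terms with $i+j=3m$ (write $i=m+l$ with $0\le l\le m$). Gathering signs and powers of $4$ yields
$$
[x^{p-1}]\bigl((x^2-\tfrac{1}{4})(x^2-\tfrac{s}{4})\bigr)^{2m} = (-1)^m 4^{-m}\sum_{l=0}^m \binom{2m}{m-l}\binom{2m}{l}\,s^l.
$$
For the other polynomial, $(x^4(1-x)^3(1-sx))^m = x^{4m}(1-x)^{3m}(1-sx)^m$, so extracting $x^{6m}$ amounts to extracting $x^{2m}$ from $(1-x)^{3m}(1-sx)^m$, giving
$$
[x^{p-1}]\bigl(x^4(1-x)^3(1-sx)\bigr)^m = \sum_{l=0}^m \binom{3m}{m+l}\binom{m}{l}\,s^l.
$$

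The conceptual heart of the argument is the observation that the ratio of the $s^l$-coefficients does not depend on $l$: a direct factorial calculation gives
$$
\frac{\binom{2m}{m-l}\binom{2m}{l}}{\binom{3m}{m+l}\binom{m}{l}} = \frac{((2m)!)^2}{(3m)!\,m!} = \frac{\binom{2m}{m}}{\binom{3m}{m}}.
$$
Since $p=6m+1$ and all factorials stay below $p$, the binomial coefficients in the denominators are nonzero modulo $p$. Hence the required congruence for every $l$ collapses to the single congruence
$$
\binom{2m}{m} \equiv (-4)^m \binom{3m}{m} \pmod p.
$$

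To finish, I would combine two classical facts. From $(1/2)_m = (2m)!/(4^m m!)$ one obtains
$$
\binom{2m}{m} = (-4)^m \binom{-1/2}{m}.
$$
Since $p=6m+1$ implies $-1/2\equiv 3m\pmod p$, substituting term by term in the defining product yields
$$
\binom{-1/2}{m} = \frac{1}{m!}\prod_{j=0}^{m-1}\left(-\frac{1}{2}-j\right) \equiv \frac{1}{m!}\prod_{j=0}^{m-1}(3m-j) = \binom{3m}{m}\pmod p,
$$
which delivers the congruence and completes the argument. In summary, the key insight is that the $l$-dependence factors out, reducing the two-variable polynomial congruence to the single $l=0$ case, which in turn is a clean consequence of the standard central binomial identity and the shift $-1/2\equiv (p-1)/2 \equiv 3m\pmod p$.
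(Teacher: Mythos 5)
Your proof is correct and follows essentially the same route as the paper: expand both polynomials by the binomial theorem, extract the $(p-1)$st coefficient, observe that the termwise comparison is independent of $l$, and reduce everything to the single congruence $((2m)!)^2(-1/4)^m \equiv (3m)!\,m! \pmod p$. The paper verifies that final congruence by writing $2^m(3m)!/(2m)!$ as $(p-1)(p-3)\cdots(p-(2m-1))$, which is the same elementary computation you package via $\binom{2m}{m}=(-4)^m\binom{-1/2}{m}$ and $-1/2\equiv 3m \pmod p$.
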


\begin{proof}
Let $f(x),g(x)$ be defined as in $(\ref{eqn:366target})$, and {consider the integer $m=(p-1)/6$.}  Using the binomial theorem, we see that
\begin{align*}
f(x^2)^{2m} & = \left[ \sum_{i=0}^{2m} \CC{2m}{i}x^{2(2m-i)}\left(\frac{-1}{4} \right)^i \right] \cdot \left[ \sum_{j=0}^{2m} \CC{2m}{j} x^{2(2m-j)} s^j \left(\frac{-1}{4}\right)^j \right] \\
& = \sum_{i,j=0}^{2m} \CC{2m}{i}\CC{2m}{j}\left(\frac{-1}{4}\right)^{i+j}s^jx^{8m-2(i+j)}
\end{align*}
and
\begin{align*}
g(x)^m & = x^{4m}\cdot \left[ \sum_{i=0}^{3m} \CC{3m}{i} (-1)^{3m-i} x^{3m-i} \right] \cdot \left[ \sum_{j=0}^m \CC{m}{j} (-1)^{j} s^j x^j \right] \\
& = \sum_{i=0}^{3m} \sum_{j=0}^m \CC{3m}{i} \CC{m}{j} (-1)^{3m-i+j} s^j x^{7m-i+j}.
\end{align*}
The $(p-1)$st coefficient of $f(x^2)^{2m}$ { occurs when $i=m-j$, and is thus}
\begin{align*}
&\sum_{j=0}^{m}\CC{2m}{m-j}\CC{2m}j\(\frac{-1}4\)^{m}s^{j} = \sum_{j=0}^{m}\frac{(2m)!^2}{(m-j)!(m+j)!j!(2m-j)!}\(\frac{-1}4\)^{m}s^{j},
\end{align*}
and the $(p-1)$st coefficient of $g(x)^m$ { occurs when $i=m+j$, and is thus}
\[
\sum_{j=0}^{m}\CC{3m}{m+j}\CC{m}js^{j} = \sum_{j=0}^{m}\frac{(3m)!m!}{(m+j)!(2m-j)!j!(m-j)!}s^{j}.
\]

Thus it suffices to show that $(3m)!m! \equiv (2m)!^2\left(\frac{-1}{4}\right)^m \pmod{p}$.  Equivalently, we claim that
\[
\frac{2^m(3m)!}{(2m)!} \equiv \frac{(-1)^m (2m)!}{2^m m!}\pmod p.
\]
To verify this, observe that since $6m=p-1$,
\begin{align*}
2^m(3m)!/(2m)! &= (p-1)(p-3) \cdots (p-(2m-1)) \\
& \equiv (-1)^m (2m-1)(2m-3)\cdots(2m-(2m-1)) \pmod p \\
& \equiv \frac{(-1)^m (2m)!}{2^m m!}\pmod p.
\end{align*}
\end{proof}


\section{Other cases}
\subsection{ $X_\l^{[3;1,2,1]}$}

\begin{theorem}Let  $\l\in \Q\setminus\{0,1\}$ and $\rho$ be   the
4-dimensional Galois representation of $G_\Q$ arising from the
genus-2 curve $y^3=x(x-1)^2(1-\l x)$. Let $\rho'$ be the Galois
representation of $G_\Q$ arising from the elliptic curve
$y^2+xy+\frac{\l}{27}=x^3$. For any $\l\in \Q$ such that the elliptic
curve does not have complex multiplication,
$\rho$ is isomorphic to $\rho'\oplus ( \rho'\otimes\chi_{-3})$ where
$\chi_{-3}$ is the quadratic character of $G_\Q$ with kernel
$G_{\Q(\sqrt{-3})}$.
\end{theorem}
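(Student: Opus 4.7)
The strategy mirrors the proof just completed for the $(3,6,6)$ family. The key observation is that both sides decompose along the subfield $\Q(\sqrt{-3})=\Q(\zeta_3)$, so I can reduce the four-dimensional comparison to a two-dimensional one over $H:=G_{\Q(\sqrt{-3})}$. On the one hand, by \S \ref{ss:Galois 3.5} (with $N=3$, $\varphi(N)=2$, and noting that $J_\lambda=J_\lambda^{new}$ since the genus is $2$), we have $\rho=\text{Ind}_H^{G_\Q}\sigma_1$ for an absolutely irreducible $2$-dimensional $\ell$-adic representation $\sigma_1$ of $H$. On the other hand, since $\text{Ind}_H^{G_\Q}\mathbf{1}_H=\mathbf{1}\oplus\chi_{-3}$, the projection formula yields
$$
\rho'\oplus(\rho'\otimes\chi_{-3})\cong\text{Ind}_H^{G_\Q}(\rho'|_H),
$$
and $\rho'|_H$ is absolutely irreducible because $E$ has no CM. The theorem therefore reduces to showing $\sigma_1\cong\rho'|_H$ as representations of $H$.

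By Chebotarev combined with the Weil bound (exactly as at the end of the $(3,6,6)$ proof), it suffices to verify that $\Tr\sigma_1(\Frob_{\mathfrak p})\equiv\Tr\rho'(\Frob_p)\pmod p$ for all but finitely many rational primes $p$. For $p\equiv 2\pmod 3$ both traces vanish trivially: on the induced side because $\chi_{-3}(p)=-1$, and on the $\rho$ side because cubing is a bijection on $\F_p$, so each $x\in\F_p$ contributes exactly one point to $y^3=x(x-1)^2(1-\lambda x)$ and the total count is $p+1$ up to boundary corrections, forcing $\Tr\rho(\Frob_p)=0$.

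For split primes $p\equiv 1\pmod 3$, I would combine Theorem \ref{main count}, Proposition \ref{prop:14} and the subsequent corollary to write $\Tr\sigma_1(\Frob_{\mathfrak p})$ as an algebraic multiple of the Gaussian hypergeometric value $\hgq{\xi^{-1}}{\xi}{\varepsilon}{\lambda}$, with $\xi\in\widehat{\F_p^\times}$ of order $3$. Via the standard point-counting congruence this becomes the coefficient of $x^{p-1}$ in $\bigl(x(x-1)^2(1-\lambda x)\bigr)^{(p-1)/3}$ modulo $p$. On the elliptic-curve side, completing the square in $y^2+xy+\lambda/27=x^3$ produces the Weierstrass model $y^2=x^3+\tfrac14 x^2-\tfrac{\lambda}{27}$, and Hasse's formula identifies $-a_p(E)$ modulo $p$ with the coefficient of $x^{p-1}$ in $\bigl(x^3+\tfrac14 x^2-\tfrac{\lambda}{27}\bigr)^{(p-1)/2}$.

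The heart of the argument, and the main obstacle, will be the combinatorial congruence between these two $x^{p-1}$ coefficients modulo $p$ — an analogue of Lemma \ref{p-1coeffs}. I expect to carry this out by multinomially expanding both polynomials, pinning down the surviving monomials via the exponent constraint and $p\equiv 1\pmod 6$, and then reducing to an elementary factorial identity mod $p$; the specific constants $\tfrac14$ and $\tfrac{1}{27}$ in the given model of $E$ are calibrated precisely so that this congruence holds, which is why the theorem is stated for the curve $y^2+xy+\lambda/27=x^3$ rather than an arbitrary quadratic twist.
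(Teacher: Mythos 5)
Your overall skeleton agrees with the paper's: restrict to $H=G_{\Q(\sqrt{-3})}$, write $\rho'\oplus(\rho'\otimes\chi_{-3})\cong \Ind_H^{G_\Q}(\rho'|_H)$, kill the inert primes by the cubing-is-a-bijection observation, and reduce the split primes to a two-dimensional trace identity. The paper handles that last identity very differently, however: since $i+j=3$ forces the bottom character $\xi^{i+j}$ to be $\varepsilon$, Corollary \ref{cor:8} gives $\sigma_1\cong\sigma_2$, so $\sigma_1$ lifts to a representation $\pi$ of $G_\Q$; the identification of $\sigma_1$ with $\rho'|_H$ is then a direct citation of Lennon's Theorem 1.1 (which applies to curves of the shape $y^2+a_1xy+a_3y=x^3$), and the inert-prime vanishing only serves to decide the quadratic-twist ambiguity between $\pi\oplus(\pi\otimes\chi_{-3})$ and $\pi^{\oplus 2}$. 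You replace the citation of Lennon by a mod-$p$ comparison of $x^{p-1}$-coefficients in the style of Lemma \ref{p-1coeffs}. That is a legitimate alternative in principle, but it is precisely the step you defer (``I expect to carry this out''), and it is the entire arithmetic content of the theorem at split primes, so as written the proposal establishes nothing there.

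More seriously, the congruence you intend to prove is false for the model you set up. Take $p=7$, $\l=2$: one computes $\sum_{x\in\F_7}\xi\left(x(x-1)^2(1-2x)\right)=1$ for $\xi$ of order $3$, so $\Tr\sigma_1(\Frob_7)=-1$; but the printed curve $y^2+xy+\tfrac{2}{27}=x^3$ reduces over $\F_7$ to $z^2=x^3+2x^2+2$, which is supersingular at $7$, i.e.\ $a_7=0$ (and at $p=13$ it gives $a_{13}=-1$ while the genus-$2$ side demands $-4$). The equation in the statement is missing a $y$: the intended curve is $y^2+xy+\tfrac{\l}{27}y=x^3$, exactly the form to which Lennon's theorem applies, and that curve does have $a_7=-1$ and $a_{13}=-4$. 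So the ``calibration of the constants'' you are counting on does not hold for $y^2=x^3+\tfrac14x^2-\tfrac{\l}{27}$, and the combinatorial identity at the heart of your plan cannot be proved as formulated; it must first be restated for the correct model. Two smaller gaps: (i) $\rho\cong\Ind_H^{G_\Q}\sigma_1$ is not immediate from \S\ref{ss:Galois 3.5} in this case, because $\sigma_1\cong\sigma_2$ makes the induction reducible and $\rho$ could a priori be $\pi^{\oplus2}$ --- ruling that out is what the inert-prime computation is actually for; (ii) to promote your mod $p$ congruence to an equality via the Weil bound you need $\Tr\sigma_1(\Frob_{\mathfrak p})\in\Z$ rather than merely in $\Z[\zeta_3]$, which again requires Corollary \ref{cor:8}.
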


\begin{proof} By the automorphism $(x,y)\mapsto(x,\zeta_3^{-1} y)$,
$\rho|_{G_\Q(\sqrt{-3})}\cong \sigma_1\oplus \sigma_2$. By {Corollary}
\ref{cor:8},  $\sigma_1\cong \sigma_2$. Thus  each representation can
be lifted to a 2-dimensional Galois representation $\pi$ of $G_\Q$.
By Theorem 1.1 of Lennon \cite{Lennon}, each $\sigma$ is
isomorphic to  $\rho'|_{G_\Q(\sqrt{-3})}$. Thus $\pi$ and $\rho'$
differ by at most a character of $G_\Q$ with kernel containing
$G_{\Q(\sqrt{-3})}$.  The last claim follows from $\Tr
\rho(\Frob_q)=0$ when $q\equiv 2 \pmod 3$.
\end{proof}
The relation between the periods are
\begin{align*}
  \tau_1=  B\(1/3, 2/3\)\hg {\frac13}{\frac23}{1}{\lambda},
  \tau_1^\prime= -\tau_1,\quad
  \tau_2=\tau_1 ,\quad
  \tau_2^\prime=&-\tau_1.
\end{align*}
For generic choice of $\l\in \Q$, $End_0(J_\l^{new})$ is a matrix algebra as $J_\l^{new}$ is isogenous to two elliptic curves.

From the Galois representation perspective, this case is analogous to the discussion of 4-dimensional Galois
representation admitting QM over a quadratic field in \cite{ALLL}.  A special case which arises from a 4-dimensional Galois representations
attached to noncongruence modular forms was discussed in detail in \cite{LLY}. The family of elliptic curves $y^2+xy+\frac{\l}{27}=x^3$ also shows up in \cite{CDLNS}  on $p$-adic analogues of Ramanujan-type formulas for $1/\pi$.

\subsection{$X_\l^{[12;9,5,1]}$}
The arithmetic group $\Gamma = (2,6,6)$ can be realized as the monodromy group of a period on $J_\l^{[12;9,5,1]}$. The quaternion algebra $H_\Gamma$ corresponding to (2,6,6) is the indefinite quaternion algebra defined over $\Q$ with discriminant $6$. For the subvariety $J_\l^{{new}}$, the lattice $\Lambda(\l)$ is generated by
$$
\begin{array}{cccc|cccc}
  \tau_1, &\zeta\tau_1, &\zeta^2\tau_1, &i\tau_1,& i\l^{\frac 16}\alpha\tau_3, &\zeta i\l^{\frac 16}\alpha\tau_3, &\zeta^2i\l^{\frac 16}\alpha\tau_3, &-\l^{\frac 16}\alpha\tau_3\\
  \tau_3, &\tau_3/\zeta, &\tau_3/\zeta^2, &-i\tau_3,& i\frac{2+\sqrt 3}{\alpha\l^{\frac 16}}\tau_1, & i\frac{2+\sqrt 3}{\alpha\zeta\l^{\frac 16}}\tau_1, &i\frac{2+\sqrt 3}{\alpha\zeta^2\l^{\frac 16}}\tau_1, &\frac{2+\sqrt 3}{\alpha\l^{\frac 16}}\tau_1\\
  \alpha\tau_3, &\zeta^5\alpha\tau_3, &\alpha\tau_3/\zeta^2, &i\alpha\tau_3,& i\tau_1/\l^{\frac 16}, &\zeta^5 i\tau_1/\l^{\frac 16}, &i\tau_1/\zeta^2\l^{\frac 16}, &-\tau_1/\l^{\frac 16}\\
  \frac{2+\sqrt 3}{\alpha}\tau_1, &\frac{2+\sqrt 3}{\alpha\zeta^5}\tau_1, &\frac{2+\sqrt 3}{\alpha\zeta^{-2}}\tau_1, &\frac{2+\sqrt 3}{i\alpha}\tau_1,& i\l^{\frac 16}\tau_3, & i\l^{\frac 16}\tau_3/\zeta^5, &\zeta^2i\l^{\frac 16}\tau_3, &\l^{\frac 16}\tau_3
  \end{array}
$$
where
$$
  \tau_1=  B\(1/4, 7/12\)\hg{\frac1{12}}{\frac14}{\frac56}{\lambda},
  \tau_3=  B\( 5/12, 3/4\)\hg{\frac 34}{ \frac {11}{12}}{\frac76}{\lambda},
$$
$$  \alpha= (1-\lambda)^{1/2} \sqrt{9+6\sqrt 3}/3.$$
Generically, $\dim_{\overline \Q} \hat V_{J_\l^{new}}=6$. In particular, we can see that $\mbox{End}(J_\l^{{new}})$ is generated by the endomorphisms
$$
A=\begin{pmatrix}\zeta&0&0&0\\0&1/\zeta&0&0\\0&0&\zeta^5&0
\\0&0&0&1/\zeta^5\end{pmatrix}, \quad
  B=\begin{pmatrix}0&0&i/\l^{\frac 16}&0\\0&0&0&i\l^{\frac 16}\\i\l^{\frac 16}&0&0&0\\0&i/\l^{\frac 16}&0&0\end{pmatrix},$$
$$  C=\begin{pmatrix}0&i\frac{2+\sqrt 3}{\alpha\l^{\frac 16}}&0&0\\i\l^{\frac 16}\alpha&0&0&0\\0&0&0&\frac{i\l^{\frac 16}}{\alpha}\\0&0&i\frac{\alpha\l^{-\frac 16}}{2+\sqrt3}&0\end{pmatrix},
$$
with the relations
$$
  A^4-A^2=-1,\quad B^2=-1,\quad C^2+A+A^{-1}=-2,
$$
and
$$
  BAB^{-1}=A^3-A, \quad CAC^{-1}=A^{-1}, \quad CBC^{-1}=(2+A+A^{-1})B.
$$
Moreover, $End_0(J_\l^{{new}})$ contains the quaternion algebra $\(\frac{-1,3}\Q\)\simeq H_\Gamma$, which is generated by $B$, and $A+A^{-1}$.

In this case {$\int_0^1\omega_1/\int_{\frac 1\l}^\infty\omega_{11}$} is algebraic as $B(1/4,7/12)/B(1/12,3/4)=\sqrt{\frac{2\sqrt{3}}3 - 1}.$ For the Gaussian hypergeometric functions, if $\lambda \neq 0, 1 \in \Bbb F_p$, we have the identities:
\begin{align*}
\hgp{\eta}{\eta^3}{\eta^{-2}}{\lambda} & = \eta^2(\lambda)\hgp{\eta^5}{\eta^3}{\eta^2}{\lambda}\\
& =\eta\(-27(1-\lambda)^6\) \, \hgp{\eta^{-5}}{\eta^{-3}}{\eta^{-2}}{\lambda}\\
& =\eta\(-27\lambda^2(1-\lambda)^6 \) \, \hgp{\eta^{-1}}{\eta^{-3}}{\eta^{2}}{\lambda},
\end{align*}
where $\eta$ is a multiplicative character of $\fpc$ of order $12$.

\subsection{$X_{\l}^{[10;2,7,7]}$} For the abelian variety
$J_\l:=J_{\l}^{[10;2,7,7]}$, the corresponding periods of $J_\l^{new}$
are
\begin{align*}
  \tau_1=&\int_0^1\omega_1= B\(3/10,
4/5\)\hg{\frac7{10}}{\frac45}{\frac{11}{10}}{\lambda}, \\
  \tau_2=&\int_0^1\omega_9= B\(7/10,
1/5\)\hg{\frac3{10}}{\frac15}{\frac{9}{10}}{\lambda},  \\
  \tau_3=&\int_0^1\omega_3= B\(9/10,
2/5\)\hg{\frac1{10}}{\frac25}{\frac{13}{10}}{\lambda}, \\
  \tau_4=&\int_0^1\omega_7= B\(1/10,
3/5\)\hg{\frac9{10}}{\frac35}{\frac{7}{10}}{\lambda},
\end{align*}
and
\begin{align*}
  \tau_1'=&\int_1^\infty\omega_1= \frac{\sqrt
5-1}{2\alpha_1(\l)\beta_1}\tau_2,\,
  \tau_2'=\int_1^\infty\omega_9=\alpha_1(\l)\beta_1\tau_1 \\
  \tau_3'=&\int_1^\infty\omega_3=\frac{-\sqrt
5-1}{2\alpha_1(\l)\beta_2}\tau_4,\,
  \tau_4'=\int_1^\infty\omega_7=  \alpha_2(\l)\beta_2\tau_3 \\
\end{align*}
where
\begin{align*}
  \alpha_1(\l)&=(-1)^{7/5}\l^{1/10}(1-\l)^{2/5},\, \beta_1=B\(7/10,
2/5\)/B\(3/10, 4/5\),\\
  \alpha_2(\l)&=(-1)^{1/5}\l^{3/10}(1-\l)^{-4/5},\, \beta_2=B\(1/10,
1/5\)/B\(9/10, 2/5\).
\end{align*}
The lattice $\Lambda(\l)$ is generated by
$$
\begin{array}{cccc|cccc}
  \tau_1, &\zeta\tau_1, &\zeta^2\tau_1, &\zeta^3\tau_1,&  \frac{\sqrt
5-1}{2\alpha_1(\l)\beta_1}\tau_2, &\zeta  \frac{\sqrt
5-1}{2\alpha_1(\l)\beta_1}\tau_2, &\zeta^2 \frac{\sqrt
5-1}{2\alpha_1(\l)\beta_1}\tau_2, & \zeta^3\frac{\sqrt
5-1}{2\alpha_1(\l)\beta_1}\tau_2\\
  \tau_2, &\frac{\tau_2}\zeta, &\frac{\tau_2}{\zeta^2},
&\frac{\tau_2}{\zeta^3},&\alpha_1(\l)\beta_1\tau_1 , &
\frac{\alpha_1(\l)\beta_1\tau_1}\zeta,
&\frac{\alpha_1(\l)\beta_1\tau_1}{\zeta^2},
&\frac{\alpha_1(\l)\beta_1\tau_1 }{\zeta^3}\\
  \tau_3, &\zeta^3\tau_3, &\zeta^6{\tau_3}, &\frac{\tau_3}\zeta,&
\frac{-\sqrt 5-1}{2\alpha_2(\l)\beta_2}\tau_4, &\zeta^3 \frac{-\sqrt
5-1}{2\alpha_2(\l)\beta_2}\tau_4, & \frac{-\sqrt
5-1}{2\alpha_2(\l)\beta_2\zeta^4}\tau_4, & \frac{-\sqrt
5-1}{2\alpha_2(\l)\beta_2\zeta}\tau_4\\
 \tau_4, &\frac{\tau_4}{\zeta^3}, &\frac{\tau_4}{\zeta^6},
&\zeta\tau_4,&\alpha_2(\l)\beta_2\tau_3,
&\frac{\alpha_2(\l)\beta_2\tau_3}{\zeta^3}, &
\frac{\alpha_2(\l)\beta_2\tau_3}{\zeta^6},
&\zeta\alpha_2(\l)\beta_2\tau_3.
  \end{array}
$$
By using Gaussian hypergeometric functions, one knows that the
subrepresentations  $\sigma_{m}$ and $\sigma_{N-m}$ differ by a character, as in \S \ref{ss:Galois 3.5}. Thus
$\beta_1$, $\beta_2$ are both algebraic. Meanwhile, $\sigma_1$ and
$\sigma_3$ do not differ
by a character.  Thus, combining with W\"ustholz's result we know that
for a generic $\l\in \overline \Q$, the 4-dimensional abelian variety
$J_\l^{new}$ is simple, and $\Lambda_{\overline\Q}(J_\l^{new})$ is
10-dimensional.

We can see that ${End_0}(J_\l^{new})$ contains the endomorphisms
$$
  A=\begin{pmatrix}\zeta&0&0&0\\0&\zeta^{-1}&0&0\\0&0&\zeta^3&0\\0&0&0&\zeta^{-3}\end{pmatrix},\quad
  B=\begin{pmatrix}0&\alpha_1(\l)\beta_1&0&0\\\frac{\sqrt
5-1}{2\alpha_1(\l)\beta_1}&0&0&0\\0&0&0&\alpha_2(\l)\beta_2\\0&0&
\frac{-\sqrt 5-1}{2\alpha_2(\l)\beta_2}&0\end{pmatrix},
$$
which satisfy the relations
$$
  A^4-A^3+A^2-A=-1,\quad B^2=A^2+A^{-2},\quad BAB^{-1}=A^{-1}.
$$
{The algebra $End_0(J_\l^{{new}})$ contains the quaternion algebra {
\[
\(\frac{\frac{\sqrt 5-5}2,\frac{\sqrt 5-1}2}{\Q(\sqrt{5})}\),
\]
which is 
the quaternion algebra defined over $\Q(\sqrt5)$ with discriminant $\mathfrak{p}_5$.}

\subsection{$X_{2}^{[5;1,4,1]}$}
For the curve $y^5=x(1-x)^4(1-2x)$, from Galois perspective, its
L-function is expected to be related to two Hilbert modular forms, which differ by
embeddings of $\Q(\sqrt 5)$ to $\mathbb C$. From numeric data, we
identified two Hilbert modular forms, which are labeled by Hilbert
Cusp Form 2.2.5.1-500.1-a in the LMFDB online database \cite{lmfdb}.

Here, we tabulate the local $L$-functions of the curve $C:\,
y^5=x(1-x)^4(1-2x)$ with factorization over $\Q(\sqrt{5})$, and the
trace data for the Hilbert cusp forms for some small primes.
$$
  \begin{array}{c|c|c}
     p & L_p(C,T) \mbox{ over } \Q(\sqrt{5})& \mbox{Hecke eigenvalues
}\\ \hline\hline
     7 & (49T^4 + 10T^2 + 1)(49T^4 - 10T^2 + 1)&-10\\ \hline
     11& (11T^2 - 2T + 1)^4 & 2,2\\ \hline
     13&  (169T^4 + 1)^2&0\\ \hline
     17& (289T^4 - 20T^2 + 1)(289T^4 +20T^2 + 1)&20\\ \hline
     19& \begin{array}{c}
           \(19T^2-5\(\frac{1+\sqrt 5}2\)T+1\)\(19T^2-5\(\frac{1-\sqrt
5}2\)T+1\)\\
           \(19T^2+5\(\frac{1+\sqrt 5}2\)T+1\)\(19T^2+5\(\frac{1-\sqrt
5}2\)T+1\)
          \end{array}& 5\(\frac{1\pm\sqrt 5}2\)\\ \hline

     31& \(\(31T^2+\(\frac{1+5\sqrt 5}2\)T+1\)\(31T^2+\(\frac{1-5\sqrt
5}2\)T+1\)\)^2&\frac{-1\pm5\sqrt 5}2  \\ \hline
     41&\(\(41T^2+\(\frac{1+5\sqrt 5}2\)T+1\)\(41T^2+\(\frac{1-5\sqrt
5}2\)T+1\)\)^2&\frac{-1\pm5\sqrt 5}2
  \end{array}
$$

\section{Acknowledgements}
The project was initiated at the Women in Numbers 3 (WIN3) workshop held at Banff International Research Station in April 2014. We are grateful for the opportunity to collaborate. We would like to thank the National Center for Theoretical Sciences (NCTS) in Taiwan for supporting Fang-Ting Tu to attend WIN3 and visit Ling Long.  Long and Tu are supported in part by NSF DMS1303292 and DMS1001332.  The fourth author would like to thank Tulane University for hosting her while working on this project.  She also thanks her Women in Sage 5 group Heidi Goodson, Anna Haensch, and Alicia Marino for help with code to compute classes of examples of this work.  We are in debt also for enlightening discussions with Noam Elkies, Jerome W. Hoffman, Kumar Murty, Matt Papanikolas, and Paula Tretkoff.   We also are grateful for the L-functions and Modular Forms Database \cite{lmfdb}.  Many computations were done using \texttt{sage, magma, maple}.



\end{document}